\title{Regressive versions of Hindman's Theorem}
\author[1]{Lorenzo Carlucci}
\author[2]{Leonardo Mainardi}
\affil[1]{Sapienza University of Rome, Department of Mathematics}
\affil[2]{Sapienza University of Rome, Department of Computer Science}
\affil[1]{lorenzo.carlucci@uniroma1.it}
\affil[2]{leonardo.mainardi@uniroma1.it}
\date{\today{}}
\newtheorem{definition}{Definition}
\newtheorem{proposition}{Proposition}
\newtheorem{theorem}{Theorem}
\newtheorem{corollary}{Corollary}
\newtheorem{lemma}{Lemma}
\newtheorem{claim}{Claim}
\newtheorem{open.problem}{Open Problem}
\newtheorem{question}{Question}
\newtheorem{remark}{Remark}
\newtheorem*{theorem*}{Theorem}
\newtheorem*{corollary*}{Corollary}
\newtheorem*{proposition*}{Proposition*}
\newtheorem*{lemma*}{Lemma}
\newtheorem*{fact*}{Fact}
\newtheorem*{claim*}{Claim}
\newtheorem*{open.problem*}{Open Problem}
\newtheorem*{remark*}{Remark}
\newtheorem*{example*}{Example}
\newtheorem*{exercise*}{Exercise}
\newcommand\Nat{\mathbf{N}}
\newcommand\RCA{\mathsf{RCA}}
\newcommand\ACA{\mathsf{ACA}}
\newcommand\WKL{\mathsf{WKL}}
\newcommand\HT{\mathsf{HT}}
\newcommand\IPT{\mathsf{IPT}}
\newcommand\RT{\mathsf{RT}}
\newcommand\regHT{\mathsf{regHT}}
\newcommand\canHT{\mathsf{canHT}}
\newcommand\REG{\mathsf{regRT}}
\newcommand\CAN{\mathsf{canRT}}
\newcommand\FIN{\mathrm{FIN}}
\newcommand\FS{\mathrm{FS}}
\newcommand\compred{\mathrm{c}}
\newcommand\W{\mathrm{W}}
\newcommand\sW{\mathrm{sW}}
\newcommand\ap{\mathsf{ap}}
\newcommand\WO{\mathrm{WO}}
\newcommand\ordX{\mathcal{X}}
\newcommand\WOP{\mathsf{WOP}}
\newcommand\RAN{\mathsf{RAN}}
\newcommand\Q{\mathsf{Q}}
\newcommand\PP{\mathsf{P}}
\newcommand\lambdam{\lambda'}
\begin{document} 

\maketitle

\begin{abstract}

When the Canonical Ramsey's Theorem by Erd\H{o}s and Rado
is applied to regressive functions, one obtains the Regressive Ramsey's Theorem by Kanamori and McAloon.
Taylor proved a ``canonical'' version of Hindman's Theorem, analogous to the Canonical Ramsey's Theorem. 
We introduce the restriction of Taylor's Canonical Hindman's Theorem 
to a subclass of the regressive functions, the $\lambda$-regressive functions, relative to an adequate 
version of min-homogeneity and prove some results about the Reverse Mathematics of this Regressive
Hindman's Theorem and of natural restrictions of it.

In particular we prove that the first non-trivial restriction of the principle is equivalent to Arithmetical 
Comprehension. We furthermore prove that the well-ordering-preservation principle for base-$\omega$ exponentiation is reducible to this same principle by a uniform computable reduction.
\end{abstract}



\section{Introduction and motivation}

Hindman's well-known Finite Sums Theorem \cite{Hin:74} states that for any finite colouring of 
the natural numbers there exists an infinite
subset of positive natural numbers such that all finite sums of distinct terms from that subset get the same
colour. 

The strength of Hindman's Theorem is a major open problem in Reverse Mathematics 
(see, e.g., \cite{Mon:11:open}) since the seminal work of Blass, Hirst and Simpson \cite{Bla-Hir-Sim:87}. They 
showed that Hindman's Theorem is provable in the system $\ACA_0^+$ (axiomatized by closure under the $\omega$-th Turing Jump) and in turn implies $\ACA_0$ (axiomatized by closure under the Turing Jump) over the base system $\RCA_0$. This leaves a huge gap between the upper and the lower bound (we refer to \cite{Sim:SOSOA, Hir:STT:14} and to the recent \cite{Dza-Mum:22} for Reverse Mathematics fundamentals).

Recently, substantial interest has been given to various restrictions of Hindman's Theorem (see \cite{Car:2021} and \cite{Dza-Mum:22} Section 9.9.3 for an overview and references). Dzhafarov, Jockusch, Solomon and Westrick \cite{DJSW:16} proved that the $\ACA_0$ lower bound on Hindman's Theorem already applies to the restriction of the theorem to colourings in $3$ colours and sums of at most $3$ terms (denoted $\HT^{\leq 3}_3$) and that Hindman's Theorem restricted to colourings in $2$ colours and sums of at most $2$ terms (denoted $\HT^{\leq 2}_2$) is unprovable in $\RCA_0$.
The first author in joint work with Ko{\l}odziejczyk, Lepore and Zdanowski later showed that $\HT^{\leq 2}_4$ implies $\ACA_0$ and
that $\HT^{\leq 2}_2$ is unprovable in $\WKL_0$ \cite{Car-Kol-Lep-Zda:20}.
However, no upper bound other than the one for the full Hindman's Theorem is known for $\HT^{\leq 2}_k$, let alone $\HT^{\leq 3}_k$, for any $k > 1$. Indeed, it is an open question in Combinatorics whether Hindman's Theorem for sums of at most $2$ terms is already equivalent to the full Hindman's Theorem (see \cite{Hin-Lea-Str:03}, Question 12).
On the other hand some restrictions of Hindman's Theorem that are equivalent to $\ACA_0$ have been isolated and called ``weak yet strong'' principles by the first author (see \cite{Car:16:wys}). Theorem 3.3 in \cite{Car-Kol-Lep-Zda:20} shows that Hindman's Theorem restricted to colourings in $2$ colours and sums of exactly $3$ terms with an apartness condition on the solution set is a weak yet strong principle in this sense.

In this paper we isolate a new natural variant of Hindman's Theorem, called the {\em Regressive Hindman's Theorem}, modeled on 
Kanamori-McAloon's Regressive Ramsey's Theorem \cite{Kan-McA:87}, and we investigate its strength in terms of provability over $\RCA_0$ and in terms of computable reductions. In particular we prove that the weakest non-trivial restriction of the Regressive Hindman's Theorem is a weak yet strong principle in the sense of \cite{Car:16:wys}, being equivalent to $\ACA_0$. We also show that the Range Existence Principle for injective functions is reducible to that same Regressive Hindman's Theorem by a uniform computable reduction (called Weihrauch reduction). Moreover, we show that the same is true of the Well-Ordering Preservation Principle for base-$\omega$ exponentiation. This principle states that, for any linear order $\ordX$, if $\ordX$ is well-ordered then $\omega^\ordX$ is well-ordered. It is known to be equivalent to $\ACA_0$ (see \cite{Hir:94}); well-ordering principles have received substantial attention in later years (see the recent survey by Michael Rathjen \cite{Rat:wop:pre} for an overview and references). No direct connection to Hindman-type theorems
has been drawn in previous works.

The theorems studied in this paper are $\Pi^1_2$-principles, i.e., principles that can be written in the following form: 
$$ \forall X (I(X) \to \exists Y S(X,Y))$$
where $I(X)$ and $S(X,Y)$ are arithmetical formulas and $X$ and $Y$ are set variables. For principles $\PP$ of this form we call any $X$ that satisfies
$I$ an {\em instance} of $\PP$ and any $Y$ that satisfies $S(X,Y)$ a {\em solution to} $\PP$ {\em for} $X$.
We will use the following notions of computable reducibility between two $\Pi^1_2$-principles $\PP$ and $\Q$, 
which have become of central interest in Computability Theory and Reverse Mathematics in recent years
(see \cite{Dza-Mum:22} for background and motivation).

\begin{enumerate}
\item $\Q$ is {\em strongly Weihrauch reducible} to $\PP$ (denoted $\Q \leq_{\sW} \PP$) if there exist Turing functionals $\Phi$ and $\Psi$ such 
that for every instance $X$ of $\Q$ we have that $\Phi(X)$ is an instance of $\PP$, and if $\hat{Y}$ is a solution to $\PP$ for $\Phi(X)$
then $\Psi(\hat{Y})$ is a solution to $\Q$ for $X$.
\item $\Q$ is {\em Weihrauch reducible} to $\PP$ (denoted $\Q \leq_\W \PP$) if there exist Turing functionals $\Phi$ and $\Psi$ such 
that for every instance $X$ of $\Q$ we have that $\Phi(X)$ is an instance of $\PP$, and if $\hat{Y}$ is a solution to $\PP$ for $\Phi(X)$
then $\Psi(X\oplus \hat{Y})$ is a solution to $\Q$ for $X$.
\item $\Q$ is {\em computably reducible} to $\PP$ (denoted $\Q \leq_{\mathrm{c}} \PP$) if every instance $X$ of $\Q$ computes an instance $\hat{X}$ of $\PP$ such that
if $\hat{Y}$ is any solution to $\PP$ for $\hat{X}$, then there is a solution $Y$ to $\Q$ for $X$ computable from $X \oplus \hat{Y}$. 
\end{enumerate}

The above reducibility notions are related by the following strict implications: 
$$ \leq_\sW \;\Longrightarrow\; \leq_\W \;\Longrightarrow\; \leq_{\mathrm{c}},$$
and make it possible to illuminate subtle differences in the intuitive idea of solving a problem $\Q$ algorithmically from a problem
$\PP$. Note that $\Q \leq_{\mathrm{c}}\PP$ implies that each $\omega$-model of $\RCA_0 + \PP$ is also a model of $\Q$
(the latter fact is usually denoted by $\Q\leq_\omega\PP$). We refer the reader to \cite{Dza-Mum:22} for examples 
witnessing how the three reducibility notions differ.

In the present paper we only establish positive reducibility results, indicating when implications of type $\PP\to \Q$ over
$\RCA_0$ are witnessed by strongly Weihrauch, Weihrauch or computable reductions. A few non-reducibility results are obtained as simple corollaries of our reducibility results and non-reducibility results from the literature. 

\section{Canonical and Regressive Ramsey's Theorems} 

We review some definitions and known facts concerning Ramsey's Theorem and its canonical and regressive versions. 
We use $\Nat$ for the set of natural numbers and $\Nat^+$ for the 
set of positive integers. For $X\subseteq \Nat$ and $n\geq 1$ we denote by $[X]^n$ the set of 
subsets of $X$ of cardinality $n$. For $k\in\Nat^+$ we identify $k$ with $\{0, 1, \dots, k-1\}$. Accordingly, 
for $S\subseteq \Nat$, $c:[S]^n \to k$ indicates a colouring of $[S]^n$ in $k$ colours. Intervals 
are intervals in $\Nat$. We start by 
recalling the statement of the standard countable Ramsey's Theorem.

\begin{definition}[Ramsey's Theorem]
Let $n, k\in\Nat^+$. We denote by $\RT^n_k$ the following principle. For all $c:[\Nat]^n \to k$ there exists
an infinite set $H\subseteq \Nat$ such that $c$ is constant on $[H]^n$. The set $H$ is called homogeneous
or monochromatic for $c$. Also, we use $\RT^n$ to denote $(\forall k \geq 1)\,\RT^n_k$ and $\RT$ to denote $(\forall n \geq 1)\,\RT^n$.
\end{definition}

For $n\in\Nat^+$, $S \subseteq \{1, \dots, n\}$, $I = \{i_1 < \dots < i_n\}\subseteq\Nat$ and $J = \{ j_1 < \dots < j_n\}\subseteq\Nat$ we say 
that $I$ and $J$ {\em agree} on $S$ if and only if for all $s \in S$, $i_s = j_s$. Note that if $S$ is empty then all
$n$-sized subsets of $\Nat$ agree on $S$.

The following generalization of Ramsey's Theorem to colourings in possibly infinitely many colours was established by Erd\H{o}s and Rado \cite{Erd-Rad:50}. 

\begin{definition}[Erd\H{o}s and Rado's Canonical Ramsey's Theorem]\label{defi:can}
Let $n\in\Nat^+$. We denote by $\CAN^n$ the following principle. For all $c: [\Nat]^n \to \Nat$ there exists an infinite set $H\subseteq \Nat$ and a finite (possibily empty) set $S \subseteq \{1, \dots, n\}$ such that for all $I,J \in [H]^n$ the equality $c(I) = c(J)$ holds if and only if $I$ and $J$ agree on $S$.
The set $H$ is called {\em canonical} for $c$. We use $\CAN$ to denote $(\forall n \geq 1)\CAN^n$.
\end{definition}

The Reverse Mathematics of $\CAN^n$ is studied in~\cite{Mil:08}, where it is denoted by $\mathsf{CAN}^n$. 
																		
As observed in~\cite{Mil:08} (Proposition 8.5), $\CAN^1$ is equivalent to $\RT^1$ over $\RCA_0$.

Kanamori and McAloon \cite{Kan-McA:87} isolated a straightforward corollary of the Canonical Ramsey's Theorem 
inspired by Fodor's Lemma in Uncountable Combinatorics. To state the Kanamori-McAloon's principle we need the following definitions.

\begin{definition}[Regressive function]
Let $n\in\Nat^+$. 
A function $c:[\Nat]^n\to\Nat$ is called regressive if and only if, for all $I \in [\Nat]^n$, $c(I) < \min(I)$ if
$\min(I)>0$, else $c(I)=0$. 
\end{definition}

\begin{definition}[Min-homogeneity]\label{defi:minhomo}
Let $n\in\Nat^+$, $c:[\Nat]^n\to \Nat$ and $H\subseteq \Nat$ an infinite set. 
The set $H$ is min-homogeneous for $c$ if and only if the following condition holds:
for any $I, J \in [H]^n$, if $\min(I)=\min(J)$ then $c(I)  = c(J).$
\end{definition}

\begin{definition}[Kanamori-McAloon's Regressive Ramsey's Theorem]
Let $n\in\Nat^+$. We denote by $\REG^n$ the following principle. For all regressive $c:[\Nat]^n \to \Nat$ 
there exists an infinite min-homogeneous set
$H\subseteq\Nat$. We denote by $\REG$ the principle $(\forall n \geq 1) \REG^n$.
\end{definition}

The Reverse Mathematics of $\REG^n$ is studied in~\cite{Mil:08}, where it is denoted
by $\mathsf{REG}^n$. Note that $\REG^1$ is trivial. 
A finite first-order miniaturization of $\REG$ was proved by Kanamori and McAloon \cite{Kan-McA:87} to be independent
from Peano Arithmetic and is often considered as one of the most mathematically natural 
examples of statements independent from that system. 

The following theorem summarizes the main known results about the Reverse Mathematics of the Canonical and Regressive versions of Ramsey's Theorem.

\begin{theorem}\label{thm:mileti}
The following are equivalent over $\RCA_0$.
\begin{enumerate}
\item $\ACA_0$.
\item $\CAN^n$, for any fixed $n \geq 2$.
\item $\REG^n$, for any fixed $n \geq 2$.
\item $\RT^n_k$, for any fixed $n \geq 3$ and $k\geq 2$.
\end{enumerate}
\end{theorem}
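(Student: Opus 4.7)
The plan is to prove the four-way equivalence by cycling through $\ACA_0$, relying on the classical Jockusch--Simpson result that $\ACA_0 \Leftrightarrow \RT^n_k$ for $n\geq 3, k\geq 2$ (see \cite{Sim:SOSOA}) to handle item~(4). What remains is to establish $\ACA_0 \Rightarrow \CAN^n \Rightarrow \REG^n \Rightarrow \ACA_0$.

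For $\ACA_0 \Rightarrow \CAN^n$, given $c:[\Nat]^n\to\Nat$ I would define an auxiliary finite-valued colouring $\tilde c:[\Nat]^{2n}\to 2^N$, with $N=\binom{2n}{n}^2$, that records, for every ordered pair of $n$-element subsets of a $2n$-element set, whether those two subsets receive the same $c$-colour. Apply $\RT^{2n}_{2^N}$, which is provable in $\ACA_0$, to obtain an infinite $\tilde c$-homogeneous set $H$, and then extract by a finite case analysis which of the four canonical behaviours of Definition~\ref{defi:can} holds on $[H]^n$.

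For $\CAN^n \Rightarrow \REG^n$, apply $\CAN^n$ to a regressive $c$ and rule out two of the four cases. The ``all distinct'' case is excluded by counting: fixing $\min(I)=i\in H$ yields infinitely many $I\in[H]^n$ with this minimum, but regressivity leaves at most $i+1$ possible colours. The ``constant'' and ``min-determined'' cases are directly min-homogeneous. For the ``max-determined'' case, regressivity forces $c(I)<\min(H)$ for every $I\in[H]^n$, so the induced function $\max(I)\mapsto c(I)$ takes only finitely many values, and an $\RT^1$-pigeonhole (provable in $\RCA_0$) extracts an infinite subset of $H$ on which $c$ is constant.

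The main obstacle is the last leg, $\REG^n \Rightarrow \ACA_0$. For $n\geq 3$ there is a clean two-step argument: given $c:[\Nat]^n\to k$, the extension defined by $\tilde c(I):=c(I)$ when $\min(I)\geq k$, and $\tilde c(I):=0$ otherwise, is regressive, and a min-homogeneous set for $\tilde c$ followed by an infinite pigeonhole on the induced function $\min(I)\mapsto c(I)$ yields an infinite $c$-homogeneous set, so $\REG^n\vdash\RT^n_k\vdash\ACA_0$. This route is insufficient for $n=2$ since $\RT^2_k$ is strictly weaker than $\ACA_0$; one must instead construct, given an injective $f:\Nat\to\Nat$, a regressive colouring $c:[\Nat]^2\to\Nat$ whose min-homogeneous sets compute $\mathrm{range}(f)$. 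The natural template $c(\{i,j\})=\max\{m<i:m\notin f[\{0,\ldots,j\}]\}$ forces, on a min-homogeneous $H$, a stable value $c_i<i$ for each $i\in H$ with $c_i\notin\mathrm{range}(f)$ and $(c_i,i)\cap\mathrm{range}(f)$ decidable from $H\oplus f$; the delicate point---and the heart of Mileti's argument---is to refine this template so that min-homogeneity (strictly weaker than full homogeneity) yields enough data to decide membership of \emph{every} natural number in $\mathrm{range}(f)$, not only those above the $\liminf$ of the witness sequence.
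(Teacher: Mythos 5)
The paper's own proof of this theorem is purely a pointer into the literature: Simpson for the $\RT^n_k$--$\ACA_0$ equivalence, Clote--Hirst for $\REG^n\Rightarrow\ACA_0$, Mileti for $\ACA_0\Rightarrow\CAN^n$, and a one-line remark that $\CAN^n\Rightarrow\REG^n$ and $\CAN^n\Rightarrow\RT^n$ are ``simple observations.'' Your proposal therefore attempts considerably more than the text does, and your cyclic decomposition $\ACA_0\Rightarrow\CAN^n\Rightarrow\REG^n\Rightarrow\ACA_0$, with $\RT^n_k$ handled separately by the Jockusch--Simpson citation, is a legitimate way to organize it. The sketches of $\ACA_0\Rightarrow\CAN^n$ (classical Erd\H{o}s--Rado derived colouring on $[\Nat]^{2n}$) and $\CAN^n\Rightarrow\REG^n$ (ruling out cases (2) and (4) using regressivity plus $\RT^1$) are essentially correct, and the $\REG^n\Rightarrow\RT^n_k$ argument for $n\geq 3$ via the cut-off colouring $\tilde{c}$ and a pigeonhole on $\min(I)\mapsto c(I)$ is also fine.

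The genuine gap is exactly where you flag it: $\REG^2\Rightarrow\ACA_0$, which is needed for the $n=2$ cases of items (2) and (3). Your ``natural template'' $c(\{i,j\})=\max\{m<i:m\notin f[\{0,\dots,j\}]\}$ does not yield the leverage you hope for. That function is non-increasing in $j$ for each fixed $i$, so it converges to $\max\{m<i:m\notin\mathrm{range}(f)\}$ regardless of $H$; min-homogeneity then just hands you this limit value $c_i$, which decides membership for $x\in(c_i,i)$ and for $x=c_i$ but says nothing about $x<c_i$. You cannot recover from this by shrinking $i$, since any $i'\leq x$ gives $c_{i'}<i'\leq x$, which is useless. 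The fix is a colouring that is \emph{not} monotone in $j$ and that stabilizes only when no new small elements are being discovered: encode, for $\{i,j\}$ with $i<j$, the \emph{last} value below $i$ that is newly enumerated into $\mathrm{range}(f)$ during the interval $[i,j)$ (and $0$ if none). Then min-homogeneity of $H$ forces all of $\mathrm{range}(f)\cap[0,h_i)$ to appear within $[0,h_{i+1})$, since a late arrival would change the colour of $\{h_i,h_j\}$ for $j$ large, and $\mathrm{range}(f)$ becomes $H\oplus f$-decidable. This is precisely the mechanism used in the paper's own Theorem~\ref{thm:regtoaca} for $\lambda\regHT^{=2}[\ap]\Rightarrow\ACA_0$, and it is the Clote--Hirst argument; attributing it to Mileti, as you do, is a small misattribution (the paper credits Clote and Hirst via Hirst's thesis for $\REG^n\Rightarrow\ACA_0$; Mileti is credited for the upper bound $\ACA_0\Rightarrow\CAN^n$).

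One further caution: you speak of the ``four canonical behaviours of Definition~\ref{defi:can}.'' For $n\geq 3$ the genuine canonical theorem has $2^n$ cases indexed by subsets of positions, not $4$, and the colouring $c(i,j,k)=j$ shows the four-case statement is false for $n=3$. If you base the $\ACA_0\Rightarrow\CAN^n$ argument on a four-way case split you will not be able to close the case analysis for $n\geq 3$; the Erd\H{o}s--Rado argument classifies the homogeneous colour of the derived $2n$-ary colouring into one of $2^n$ canonical patterns.
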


\begin{proof}
The equivalences concerning $\ACA_0$ and Ramsey's Theorems are all due to Simpson 
(Theorem III.7.6 in \cite{Sim:SOSOA}), based on the 
computability-theoretic analysis by Jockusch \cite{Joc:72}. 
The fact that $\REG^n$ implies $\ACA_0$ is due to Hirst, see~\cite{Hir:phd}.  
That $\ACA_0$ implies $\CAN^n$ is due to Mileti, using a new proof of the Canonical Ramsey's Theorem~\cite{Mil:08}. 
The implications from $\CAN^n$ to $\RT^n$ and $\CAN^n$ to $\REG^n$ are simple observations. 
\end{proof}

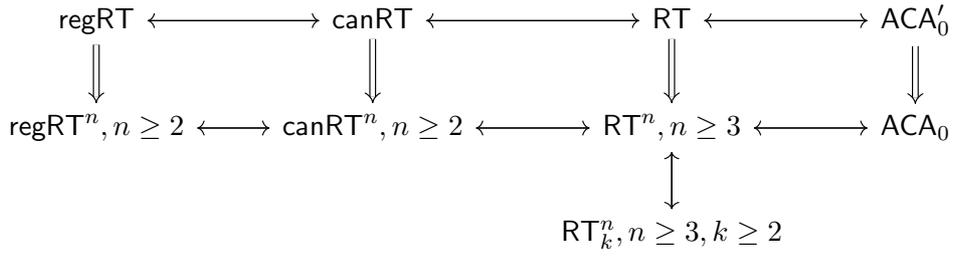
\begin{figure}
\centering
\[\begin{tikzcd}[row sep=2em,column sep=2.5em]
\REG \arrow[d, Rightarrow] \arrow[r] &  \arrow[l] \CAN  \arrow[d, Rightarrow] \arrow[r] & \arrow[l] \RT \arrow[r] \arrow[d, Rightarrow] & \arrow[l] \ACA_0' \arrow[d, Rightarrow]\\
\REG^n, n\geq 2  \arrow[r] & \arrow[l] \CAN^n, n\geq 2 \arrow[r] &  \arrow[l] \RT^n, n\geq 3  \arrow[d] \arrow[r] & \ACA_0 \arrow[l]\\
& & \arrow[u] \RT^n_k, n\geq 3, k\geq 2 &
\end{tikzcd}\]
\caption{Implications over $\RCA_0$. Double arrows indicate strict implications. The equivalences with $\ACA_0$ are from 
Theorem~\ref{thm:mileti}. For the other implications we refer the reader to~\cite{Mil:08}.} \label{fig:implicationsREG}
\end{figure}

Theorem 6.14 in Hirst's Ph.D. Thesis~\cite{Hir:phd} gives an implication (and a strong Weihrauch reduction) 
from $\RT^{2n-1}_2$ to $\REG^n$, for all $n \geq 2$. 

There seems to be no direct and exponent-preserving proof of $\RT^n$ from $\REG^n$ in the literature. 
A simple proof of this implication is in Proposition \ref{prop:regrt} below. As pointed out by one of the anonymous reviewers of the present paper, a simple forgetful function argument proves $\RT^n$ from $\REG^{n+1}$.

Also note that Ramsey's Theorem for pairs is strictly between $\RCA_0$ and $\ACA_0$ (see~\cite{Hir:STT:14} for details). 
Moreover, the principles $\CAN$, $\RT$ and $\REG$ are all equivalent to $\ACA_0'$, the system obtained by adding to 
$\RCA_0$ the axiom $\forall n \forall X \exists Y (Y = (X)^{(n)})$ stating the closure of the set universe under the $n$-th Turing Jump for every $n$; see~\cite{Mil:08}, Proposition~8.4. The main relations among Canonical, Regressive and standard Ramsey's Theorems with respect to implication over $\RCA_0$ are visualized in Figure~\ref{fig:implicationsREG}.

\section{Canonical and Regressive Hindman's Theorems} 

We start by recalling Hindman's Finite Sums Theorem~\cite{Hin:74}. For a set $X\subseteq \Nat$ we denote by $\FS(X)$ the 
set of all finite non-empty sums of distinct elements of $X$.

\begin{definition}[Hindman's Theorem]\label{thm:ht}
Let $k\in\Nat^+$. We denote by $\HT_k$ the following principle. For all $c: \Nat \to k$ there exists an infinite set 
$H\subseteq \Nat$ such that $c$ is constant on $\FS(H)$. We denote by $\HT$ the principle $(\forall k\geq 1)\,\HT_k$.
\end{definition}

For technical convenience, Hindman's Theorem is usually stated with $\Nat^+$ instead of $\Nat$. Obviously we can always assume without loss of generality that $H$ in the above definition is a subset of $\Nat^+$.

Taylor \cite{Tay:76} proved the following ``canonical'' version of Hindman's Theorem, analogous to the Canonical Ramsey's Theorem by Erd\H{o}s and Rado (Definition \ref{defi:can}). We denote by $\FIN(\Nat)$ the set of non-empty finite subsets of $\Nat$.

\begin{definition}[Taylor's Canonical Hindman's Theorem]\label{thm:taylor}
We denote by $\canHT$ the following principle. For all $c: \Nat \to \Nat$ there exists an infinite set $H=\{h_0 < h_1 < \cdots \} \subseteq \Nat$ such that one of the following holds:
\begin{enumerate}
\item For all $I, J \in \FIN(\Nat)$, $c(\sum_{i \in I} h_i) = c(\sum_{j\in J} h_j)$.
\item For all $I, J \in \FIN(\Nat)$,  $c(\sum_{i \in I} h_i) = c(\sum_{j\in J} h_j)$ if and only if $I=J$.
\item For all $I, J \in \FIN(\Nat)$,  $c(\sum_{i \in I} h_i) = c(\sum_{j\in J} h_j)$ if and only if $\min(I)=\min(J)$.
\item For all $I, J \in \FIN(\Nat)$,  $c(\sum_{i \in I} h_i) = c(\sum_{j\in J} h_j)$ if and only if $\max(I)=\max(J)$.
\item For all $I, J \in \FIN(\Nat)$,  $c(\sum_{i \in I} h_i) = c(\sum_{j\in J} h_j)$ if and only if $\min(I)=\min(J)$ and $\max(I)=\max(J)$.
\end{enumerate}
The set $H$ is called {\em canonical} for $c$.
\end{definition}

None of the cases in Definition~\ref{thm:taylor} can be omitted without falsifying Taylor's Theorem. 
For technical convenience, $\canHT$ is usually stated with $\Nat^+$ instead of $\Nat$. We can always assume without loss of generality that $H$ in the above definition is a subset of $\Nat^+$.

We first observe how Taylor's Theorem implies the standard Hindman's Theorem just as the Canonical Ramsey's Theorem implies
Ramsey's Theorem. 

\begin{proposition}\label{prop:can_to_ht}
$\canHT$ implies $\HT$ over $\RCA_0$. Moreover, $\canHT \geq_\sW \HT$.
\end{proposition}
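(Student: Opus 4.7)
The plan is to mimic the classical deduction of Ramsey's Theorem from the Canonical Ramsey's Theorem. Given an instance $c : \Nat \to k$ of $\HT$, I would set $\Phi(c) = c$, viewed as a function into $\Nat$, and feed it to $\canHT$ to obtain an infinite set $H = \{h_0 < h_1 < \dots\}$ which is canonical for $c$ in the sense of Definition~\ref{thm:taylor}. The key observation is that, because the range of $c$ has cardinality at most $k$, only case (1) of Taylor's dichotomy is compatible with $c$: in cases (2), (3), (4), (5) the restriction of $c$ to $\FS(H)$ would take, respectively, infinitely many values indexed by $I \in \FIN(\Nat)$, by $\min(I)$, by $\max(I)$, or by the pair $(\min(I),\max(I))$, contradicting the assumption that $c$ takes only finitely many values. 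Hence $\FS(H)$ must be monochromatic for $c$, so $H$ already witnesses $\HT_k$.

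To upgrade to $\HT[\ap]$, I would then invoke Proposition~\ref{prop:self} on $H$ to extract an infinite apart set $B \subseteq \FS(H)$ computably from $H$. Since $\FS(B) \subseteq \FS(\FS(H)) = \FS(H)$ and $\FS(H)$ is monochromatic, $\FS(B)$ is monochromatic as well, so $B$ is a solution to $\HT_k[\ap]$ for the original coloring $c$.

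For the uniform reduction $\canHT \geq_\sW \HT[\ap]$, the forward functional $\Phi$ is the identity map on the coloring, and the backward functional $\Psi$ takes the canonical solution $H$ returned by $\canHT$ and runs the apartness-extraction procedure from Proposition~\ref{prop:self} to output $B$. Crucially, $\Psi$ needs only $H$ (not the input $c$), since the passage $H \mapsto B$ is purely a combinatorial thinning based on the binary representations of elements of $\FS(H)$; this is exactly what is required for a strong Weirauch reduction.

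The main potential obstacle is the verification of the case analysis in the first paragraph, and in particular making sure that cases (2)--(5) genuinely force the color $c$ to take infinitely many values on $\FS(H)$. This is straightforward once one notes that for each of those cases the displayed equivalence partitions $\FIN(\Nat)$ into infinitely many classes (by $I$ itself, by $\min(I)$, by $\max(I)$, or by the pair $(\min(I),\max(I))$), each of which would have to receive a distinct color in $\{0, \dots, k-1\}$, a contradiction. No appeal to $\ACA_0$ is required: the entire argument is carried out in $\RCA_0$ once a canonical set $H$ is in hand.
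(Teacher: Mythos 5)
Your argument is, in substance, the same as the paper's; you simply perform the thinning to an apart set \emph{after} the canonical case analysis, whereas the paper first passes to $\canHT[\ap]$ via Corollary~\ref{cor:canap} (which itself invokes Proposition~\ref{prop:self}) and then does the case analysis. The key observation, that only case (1) of Definition~\ref{thm:taylor} is compatible with a coloring of finite range, is exactly what the paper uses, and your remark that the backward functional $\Psi$ depends only on $H$ correctly yields a strong Weirauch reduction.

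There is, however, one false assertion in your thinning step: $\FS(\FS(H)) = \FS(H)$ does not hold in general, even for apart $H$. Take $H = \{4^i : i \geq 0\}$, so that $\FS(H)$ consists of sums of distinct powers of $4$. Then $1, 5 \in \FS(H)$ but $1+5 = 6 \notin \FS(H)$. More generally, two elements of $\FS(H)$ may share $H$-terms in their supports, so their sum need not be a sum of \emph{distinct} elements of $H$. The inclusion you actually need, namely $\FS(B) \subseteq \FS(H)$, is true, but for a different reason: the set $B$ produced by the standard proof of Proposition~\ref{prop:self} (Corollary 9.9.8 in Dzhafarov--Mummert) has the additional property that its elements are sums over \emph{pairwise disjoint} finite subsets of $H$, so any finite sum of distinct elements of $B$ coalesces into a sum of distinct elements of $H$. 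That disjoint-support property is not part of the literal statement of Proposition~\ref{prop:self}, so you should invoke it explicitly rather than derive it from the (false) identity $\FS(\FS(H)) = \FS(H)$. With that one repair your proof is correct.
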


\begin{proof}
Let $c:\Nat\to k$ be a finite colouring of $\Nat$, with $k\in\Nat^+$. By $\canHT$ there exists an infinite
set $H\subseteq \Nat^+$ such that one of the five canonical cases in Definition~\ref{thm:taylor} occurs.
It is easy to see that, since $c$ is a colouring in $k$ colours, only case (1) of Definition~\ref{thm:taylor} can occur. Thus $\FS(H)$ is
homogeneous for $c$. The argument obviously establishes a strong Weihrauch reduction. 
\end{proof}

In the usual Finite Unions versions of Hindman's Theorem and of Taylor's Theorem the instance is a finite colouring of the finite subsets of $\Nat$ and the solution is an infinite sequence $(B_i)_{i\in\Nat}$ of finite subsets of $\Nat^+$ satisfying the so-called {\em block condition}: for all $i <j$, $\max(B_i) < \min(B_j)$; henceforth we will write $X < Y$ to indicate that this condition holds for the finite sets $X$ and $Y$. When this condition is dropped, Hindman's Finite Unions Theorem becomes much weaker (in particular, provable in $\RCA_0$) as shown by Hirst (see~\cite{Car:2021} for references).
We introduce the corresponding property in the finite sums setting. This property is already implicit in Hindman's original proof \cite{Hin:74} and was called {\em apartness} by the first author in~\cite{Car:16:wys}. Let $n\in \Nat^+$. If $n=2^{t_1}+\dots+2^{t_p}$ with $0 \leq t_1 < \dots < t_p$ let $\lambda(n)=t_1$ and $\mu(n)=t_p$ (the notation is from~\cite{Bla-Hir-Sim:87}). We set $\lambda(0)=\mu(0)=0$.

\begin{definition}[Apartness Condition] 
A set $X$ satisfies the apartness condition if for all $x,x'\in X$ such that $x < x'$, we have $\mu(x)<\lambda(x')$. If $X$ satisfies the apartness condition we say that $X$ is apart.
\end{definition}

If $\mathsf{P}$ is a Hindman-type principle, we denote by $\mathsf{P}$ {\em with apartness} or $\mathsf{P}[\ap]$, the principle $\mathsf{P}$ with the apartness condition imposed on the solution set.

In Hindman's original proof the apartness condition is ensured 
by a simple counting argument (Lemma 2.2 in~\cite{Hin:72}) 
on any solution to the Finite Sums Theorem, i.e., an infinite $H\subseteq\Nat$ 
such that $\FS(H)$ is monochromatic (Lemma 2.3 in~\cite{Hin:72}). 
As noted in \cite{Bla-Hir-Sim:87}, the proof shows that a solution satisfying the apartness condition 
can be obtained computably in any such solution. In the Reverse Mathematics setting, one needs to be slightly more
careful to establish that $\HT$ 
implies $\HT$ {\em with apartness} over $\RCA_0$. 

We first check that Lemma 2.2 in \cite{Hin:72} holds in $\RCA_0$.

\begin{lemma}\label{lem:counting} The following is provable in $\RCA_0$: 
For all $\ell$, for all $k$, for all finite sets $X$, if $X$ has cardinality $2^k$ and is such that $\lambda(x)=\ell$ for all $x\in X$, then there exists 
$Y\subseteq X$ such that $\lambda(\sum_{y \in Y} y) \geq \ell + k$.
\end{lemma}

\begin{proof}
The Lemma is established by a straightforward induction on $k$. We give the details for completeness.

For the base case, let $k=0$ and let $X=\{x\}$ be a finite set of cardinality $2^0$ such that $\lambda(x) = \ell$.
Obviously choosing $Y=X$ gives the desired solution.


For the inductive step, let $k\geq 0$ and let $X$ be a set of cardinality $2^{k+1}$ such that for all $x \in X$ we have $\lambda(x)= \ell$. Let $A$ and $B$
be two disjoint subsets of $X$ each of cardinality $2^k$. By inductive hypothesis there exists $A' \subseteq A$ such that $\lambda(\sum_{a \in A'} a)\geq \ell + k$ and there exists $B'\subseteq B$
such that $\lambda(\sum_{b\in B'} b) \geq \ell + k$. We distinguish the following cases. If $\lambda(\sum_{a \in A'} a) = \ell + k$ and $\lambda(\sum_{b\in B'} b) = \ell + k$ then 
$\lambda (\sum_{ c \in A' \cup B'} c) \geq \ell + k +1$. If either $\lambda(\sum_{a \in A'} a) > \ell + k$ or $\lambda(\sum_{b\in B'} b) > \ell + k$ then we are done.

The argument can be carried out in $\RCA_0$
since quantification over finite sets formally means quantification over their numerical codes and the set $Y$ is a finite
subset of the finite set $X$, so that the existential quantifier over $Y$ is bounded. The induction predicate is 
then $\Pi^0_1$, and $\Pi^0_1$-induction holds in $\RCA_0$.
\end{proof}

The following Lemma appears as Lemma 9.9.6 in Dzhafarov and Mummert~\cite{Dza-Mum:22}. 
As pointed out by one of the reviewers of the present paper, there is an error in the 
proof in \cite{Dza-Mum:22} (where it is assumed that the element denoted by $x_2$ is in $\FS(I)$). 
We give an alternative argument, using Lemma \ref{lem:counting}.

\begin{lemma} \label{lem:apart}
The following is provable in $\RCA_0 + \RT^1$: For every $m \in \Nat$ and every infinite $I \subseteq \Nat$, 
there exists $x \in \FS(I)$ with $\lambda(x) \geq m$. 
\end{lemma}

\begin{proof}
 Fix $m$ and $I$ and suppose that every $x \in \FS(I)$ satisfies $\lambda(x) < m$. In particular this implies that every $x \in I$ satisfies $\lambda(x) < m$, since $I \subseteq \FS(I)$. By $\RT^1$ there exists an $\ell < m$ 
and an infinite set $J \subseteq I$ such that
$\lambda(x) = \ell$ for all $x \in J$.

Since $\ell < m$ there exists $k$ such that $\ell + k = m$. Pick a subset $X\subseteq J$ of cardinality $2^k$. Then by Lemma \ref{lem:counting} there exists a $Y\subseteq X$ such that
$\lambda(\sum_{y \in Y} y) \geq \ell + k = m$. This contradicts the hypothesis that $\lambda(x) < m$ for all $x \in \FS(I)$ .

\end{proof}

As a corollary one obtains the following Proposition, which will be used to show that $\HT$ self-strengthens to $\HT[\ap]$ over
$\RCA_0$.

\begin{proposition}\label{prop:self}
\hfill
\begin{enumerate}
\item The following is provable in $\RCA_0 + \RT^1$: For every infinite set $I\subseteq\Nat$, there is an infinite set $J$ such that $J$ is apart and $\FS(J) \subseteq \FS(I)$. 
\item For all infinite set $I\subseteq$ of natural numbers there exists an infinite set $J$ of natural numbers computable in $I$ such that $J$ is apart and $\FS(J)\subseteq \FS(I)$. 
\end{enumerate}
\end{proposition}

\begin{proof}
Define a sequence of elements $x_0 < x_1 < \cdots$ in $\FS(I)$ recursively as follows. Let $x_0 = \min(I)$. Given $x_i$ for some $i\in\Nat$, 
let $x_{i+1}$ be the least element of $\FS(I \setminus [0, x_i])$ such that $\lambda(x_{i+1}) > \mu(x_i)$.
The existence of $x_{i+1}$ follows from Lemma \ref{lem:apart}. Let $J = \{ x_i \,:\, i \in\Nat\}$. By construction $J$ is apart and $\FS(J)\subseteq \FS(I)$.
\end{proof}

Proposition \ref{prop:self} is close in both statement and proof to Corollary 9.9.8 in \cite{Dza-Mum:22} but ensures $\FS(J)\subseteq \FS(I)$
rather than $J \subseteq \FS(I)$ as in \cite{Dza-Mum:22}. This stronger condition is indeed needed in the proof of the following 
corollary, which appears as Theorem 9.9.9 in \cite{Dza-Mum:22}. The proof of the latter contains an error when it is claimed that
$J \subseteq \FS(I)$ implies $\FS(J) \subseteq \FS(I)$. 

\begin{corollary}\label{cor:HTap}
$\HT$ implies $\HT[\ap]$ over $\RCA_0$. Moreover $\HT \geq_{\sW}\HT[\ap]$.
\end{corollary}

\begin{proof}
From Proposition \ref{prop:self} and the fact that $\HT$ trivially implies $\RT^1$ over $\RCA_0$. 
Let $c:\Nat \to k$. Let $I$ be a solution to $\HT$ for $c$. By Proposition \ref{prop:self} there exists an infinite $J$ such that
$\FS(J)\subseteq \FS(I)$ and $J$ is apart. 

It is clear from the proof of Proposition \ref{prop:self} that there is a Turing functional that computes $J$ from $I$ uniformly.
This is sufficient to establish the claimed strong Weihrauch reduction.
\end{proof}

It is natural to ask whether Taylor's Theorem satisfies a similar self-strengthening with respect to the apartness 
condition. A positive answer is expected by considering the finite unions version of the theorem. Yet to establish the result in 
$\RCA_0$ the situation has to be analyzed more closely as we have done above for Hindman's Theorem. 
As observed by one of the reviewers of the present paper, the above argument does not immediately apply to 
the case of Taylor's Theorem. Indeed, what the min-term (or max-term) of a number is depends on whether that number
is seen as a sum of elements of $I$ or as a sum of elements of $J$, in the notation of Proposition \ref{prop:self} above.
Nevertheless Taylor's Theorem {\em does} imply its own self-strenghtening with apartness, as we next prove. 

\begin{theorem}\label{thm:canap}
$\canHT$ implies $\canHT[\ap]$ over $\RCA_0$.  Moreover, $\canHT\geq_\sW \canHT[\ap]$.
\end{theorem}

\begin{proof}
Given $c: \Nat \to \Nat$, let $H = \{h_0 < h_1 < \cdots \}$ be a solution to $\canHT$ for $c$.
Let $H' = \{ h'_1 < h'_2 < \cdots \}$ be an infinite apart set such that $\FS(H') \subseteq \FS(H)$ (defined as the set $J$ in the proof of Proposition \ref{prop:self}.

For each $i \in \Nat$, let $A_i \in \FIN(\Nat)$ be such that $\sum_{a \in A_i} h_a = h'_i$  and $h_{\min(A_i)} > h'_{i-1}$ if $i>0$. A non-empty set with these properties exists by definition of $H'$. We fix a uniform computable method to select $A_i$ if more than one choice exists (for instance, we take the set $A$ that satisfies the conditions above and that minimizes $\sum_{a \in A} 2^a$). Then, we can state the following three Claims.

\begin{claim}\label{cl:seq-propr}
For any set of indexes $I = \{i_0 < i_1 < \cdots < i_m\} \in \FIN(\Nat)$, the following properties hold:
\begin{enumerate}[(i)]
\item\label{propr-block} $A_{i_0} < A_{i_1} < \cdots < A_{i_m}$.
\item\label{propr-min} $\min(\bigcup_{i \in I} A_i) = \min(A_{i_0})$.
\item\label{propr-max} $\max(\bigcup_{i \in I} A_i) = \max(A_{i_m})$.
\item\label{propr-sum} $\sum_{i \in I} h'_i = \sum_{s \in \bigcup_{i \in I} A_i} h_s$.
\end{enumerate}

\end{claim}

\begin{proof}
\ref{propr-block} derives from the fact that, for any $s \in (0,m]$, $h_{\min(A_{i_s})} > h'_{i_s-1} \geq h'_{i_{s-1}} \geq h_{\max(A_{i_{s-1}})}$, which implies $\min(A_{i_s}) > \max(A_{i_{s-1}})$ because $H$ is enumerated in increasing order.

\ref{propr-min}, \ref{propr-max}, and \ref{propr-sum} are trivial consequences of \ref{propr-block}.
\end{proof}

\smallskip

\begin{claim}\label{propr:min-cons}
For any $I = \{i_0 < i_1 < \cdots < i_m\} \in \FIN(\Nat)$ and $J=\{j_0 < j_1 < \cdots < j_n\} \in \FIN(\Nat)$, $\min(I)=\min(J)$ if and only if $\min(\bigcup_{i \in I} A_i)=\min(\bigcup_{j \in J} A_j)$.
\end{claim}

\begin{proof}
($\Longrightarrow$) By hypothesis, $i_0 = j_0$, hence $A_{i_0} = A_{j_0}$ and $\min(A_{i_0}) = \min(A_{j_0})$. Then, by Claim \ref{cl:seq-propr}.\ref{propr-min}, $\min(\bigcup_{i \in I} A_i)=\min(\bigcup_{j \in J} A_j)$.

\smallskip

($\Longleftarrow$) By hypothesis, $\min(\bigcup_{i \in I} A_i)=\min(\bigcup_{j \in J} A_j)$ so, by Claim \ref{cl:seq-propr}.\ref{propr-min}, $\min(A_{i_0}) = \min(A_{j_0})$ and then $h_{\min(A_{i_0})} = h_{\min(A_{j_0})}$. Thus, we can show that $i_0 = j_0$, i.e., $\min(I) = \min(J)$. Assume otherwise, and suppose $i_0 < j_0$ (the case $i_0 > j_0$ is analogous). By definition of $A_{j_0}$, we can derive $h_{\min(A_{j_0})} > h'_{j_0-1} \geq h'_{i_0} \geq h_{\min(A_{i_0})}$, hence contradicting $h_{\min(A_{i_0})} = h_{\min(A_{j_0})}$.
\end{proof}

\smallskip

\begin{claim}\label{propr:max-cons}
For any $I = \{i_0 < i_1 < \cdots < i_m\} \in \FIN(\Nat)$ and $J=\{j_0 < j_1 < \cdots < j_n\} \in \FIN(\Nat)$, $\max(I)=\max(J)$ if and only if $\max(\bigcup_{i \in I} A_i) = \max(\bigcup_{j \in J} A_j)$.
\end{claim}

\begin{proof}
($\Longrightarrow$) By hypothesis, $i_m = j_n$, hence $A_{i_m} = A_{j_n}$ and $\max(A_{i_m}) = \max(A_{j_n})$. Then, by Claim~\ref{cl:seq-propr}.\ref{propr-max}, $\max(\bigcup_{i \in I} A_i) = \max(\bigcup_{j \in J} A_j)$.

\smallskip

($\Longleftarrow$) By hypothesis, $\max(\bigcup_{i \in I} A_i) = \max(\bigcup_{j \in J} A_j)$ so, by Claim~\ref{cl:seq-propr}.\ref{propr-max}, $\max(A_{i_m}) = \max(A_{j_n})$ and then $h_{\max(A_{i_m})} = h_{\max(A_{j_n})}$. Thus, we can show that $i_m = j_n$, i.e., $\max(I) = \max(J)$. Assume otherwise, and suppose $i_m < j_n$ (the case $i_m > j_n$ is analogous). By definition of $A_{j_n}$, we can derive $h_{\max(A_{j_n})} \geq h_{\min(A_{j_n})} > h'_{j_n-1} \geq h'_{i_m} \geq h_{\max(A_{i_m})}$, hence contradicting $h_{\max(A_{i_m})} = h_{\max(A_{j_n})}$.
\end{proof}

\medskip

Now we can show that $H'$ is a solution to $\canHT$ for $c$ by analyzing each case of Definition~\ref{thm:taylor}.

\medskip

\emph{Case 1.}  For any  $I, J \in \FIN(\Nat)$, by homogeneity of $H$ and by Claim~\ref{cl:seq-propr}.\ref{propr-sum}, $c(\sum_{i \in I} h'_i) = c(\sum_{s \in \bigcup_{i \in I} A_i} h_s) = c(\sum_{t \in \bigcup_{j \in J} A_j} h_t) = c(\sum_{j \in J} h'_j)$.

\medskip

\emph{Case 2.} Let $I, J \in \FIN(\Nat)$. If $I=J$, then $c(\sum_{i \in I} h'_i) = c(\sum_{j \in J} h'_j)$. Now assume $I \neq J$, as witnessed by $w \in I \setminus J$ (the case $w \in J \setminus I$ is analogous). By Claim \ref{cl:seq-propr}.\ref{propr-block} applied to $J \cup \{w\}$, we have that $A_w \cap A_j = \emptyset$ for all $j \in J$, therefore $\bigcup_{i \in I} A_i \neq \bigcup_{j \in J} A_j$.

Then, $c(\sum_{i \in I} h'_i) = c(\sum_{s \in \bigcup_{i \in I} A_i} h_s) \neq c(\sum_{t \in \bigcup_{j \in J} A_j} h_t) = c(\sum_{j \in J} h'_j)$, where the two equalities hold by  Claim~\ref{cl:seq-propr}.\ref{propr-sum}, while the inequality holds by Case 2 of Definition~\ref{thm:taylor}, since $c$ is applied to sums of different elements in $H$ on the two sides of the equality, as we noted above.

\medskip

\emph{Case 3.} Let $I, J \in \FIN(\Nat)$. If $\min(I)=\min(J)$, then $c(\sum_{i \in I} h'_i) = c(\sum_{s \in \bigcup_{i \in I} A_i} h_s) = c(\sum_{t \in \bigcup_{j \in J} A_j} h_t) = c(\sum_{j \in J} h'_j)$, where the first and the last equality hold by  Claim~\ref{cl:seq-propr}.\ref{propr-sum}, while the second equality holds by Case 3 of Definition~\ref{thm:taylor}, since in both sides of the equality, $c$ is applied to sums of elements in $H$ having the same minimum term by Claim~\ref{propr:min-cons}. Similarly, if $\min(I') \neq \min(J')$, we have $c(\sum_{i \in I} h'_i) = c(\sum_{s \in \bigcup_{i \in I} A_i} h_s) \neq c(\sum_{t \in \bigcup_{j \in J} A_j} h_t) = c(\sum_{j \in J} h'_j)$.

\medskip
\emph{Case 4.}  The proof is similar to the proof of Case 3, but using Claim~\ref{propr:max-cons} in place of Claim~\ref{propr:min-cons}.

\medskip

\emph{Case 5.} The proof is analogous to the proof of Cases 3 and 4.

\medskip

\end{proof}

As observed in~\cite{Kan-McA:87}, when the Canonical Ramsey's Theorem is applied to regressive functions 
the Regressive Ramsey's Theorem is obtained. 
Similarly, a regressive version of Hindman's Theorem follows from Taylor's Theorem.
We introduce the suitable versions of the notions of regressive function and min-homogeneous set. 

\begin{definition}[$\lambda$-regressive function]
A function $c:\Nat\to\Nat$ is called $\lambda$-regressive if and only if, for all $n \in\Nat$, $c(n) < \lambda(n)$
if $\lambda(n)>0$ and $c(n)=0$ if $\lambda(n)=0$.
\end{definition}

Obviously every $\lambda$-regressive function is regressive since $\lambda(n) < n$
for $n\in\Nat^+$. 

\begin{definition}[Min-term-homogeneity for $\FS$]
Let $c:\Nat\to \Nat$ and $H=\{h_0 < h_1 < \cdots\}\subseteq \Nat$. 
We call $\FS(H)$ min-term-homogeneous for $c$
if and only if, 
for all $I, J \in \FIN(\Nat)$, if $\min(I)=\min(J)$ then 
$c(\sum_{i \in I} h_i) = c(\sum_{j\in J} h_j)$.
\end{definition}

The following is an analogue of Kanamori-McAloon's Regressive Ramsey's Theorem in the spirit of Hindman's Theorem.

\begin{definition}[Regressive Hindman's Theorem]
We denote by $\lambda\regHT$ the following principle. For all $\lambda$-regressive $c:\Nat \to \Nat$ 
there exists an infinite
$H\subseteq\Nat$ such that $\FS(H)$ is min-term-homogeneous.
\end{definition}

For technical convenience we will always assume that $H \subseteq \Nat^+$.
In this paper we do not investigate optimal upper bounds on $\canHT$ and $\lambda\regHT$.

We start by observing how Taylor's Theorem implies the Regressive Hindman's Theorem just as the Canonical Ramsey's Theorem implies the Kanamori-McAloon Regressive Ramsey's Theorem. 

\begin{proposition}\label{prop:canHTtoregHTap}
$\canHT$ implies $\lambda\regHT$ over $\RCA_0$. Moreover, $\canHT\geq_\mathrm{\sW} \lambda\regHT$.
\end{proposition}

\begin{proof}
Let $c:\Nat\to \Nat$ be a $\lambda$-regressive function. By $\canHT$ there exists an infinite set $H\subseteq \Nat^+$ such that one of the five canonical cases occurs for $\FS(H)$.
It is easy to see that, since $c$ is $\lambda$-regressive, only case (1) and case (3) of Definition \ref{thm:taylor} can occur. Thus $\FS(H)$ is min-term-homogeneous for $c$.
\end{proof}

Similarly to Hindman's Theorem and Taylor's Theorem, the Regressive Hindman's Theorem self-improves to its own version with apartness, as shown below. 
We first show that $\lambda\regHT$ implies the Infinite Pigeonhole Principle.

\begin{lemma} \label{regHTtoRT1}
$\lambda\regHT$ implies $\RT^1$  over $\RCA_0$.
\end{lemma}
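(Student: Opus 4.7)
The plan is to derive $\RT^1$ from $\regHT$ by a shift-encoding trick. Given a finite coloring $c : \Nat^+ \to k$, I would define the auxiliary coloring $c' : \Nat^+ \to \Nat$ by setting $c'(n) = c(n) + 1$ whenever $c(n) + 1 < \lambda(n)$, and $c'(n) = 0$ otherwise. Since $c(n) + 1 \leq k$, the function $c'$ is $\lambda$-regressive and takes values in $\{0, 1, \dots, k\}$, with the crucial property that $c'(n) \geq 1$ allows us to recover $c(n) = c'(n) - 1$ unambiguously. I would then apply $\regHT$ to $c'$ to obtain an infinite $H = \{h_0 < h_1 < \dots\} \subseteq \Nat^+$ such that $\FS(H)$ is min-term-homogeneous for $c'$.

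The key step is to show that some index $i$ satisfies $v_i := c'(h_i) \geq 1$. Once this is established, min-term-homogeneity forces $c'(h_i + h_j) = v_i$ for every $j > i$, and since $v_i \geq 1$ the definition of $c'$ gives $c(h_i + h_j) = v_i - 1$. Hence $\{h_i + h_j : j > i\}$ is the desired infinite $c$-monochromatic subset, in color $v_i - 1$.

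To prove the existence of such $i$, I would argue by contradiction: if $v_i = 0$ for every $i$, then $c'(s) = 0$ for every $s \in \FS(H)$, which by the definition of $c'$ unpacks to $\lambda(s) \leq c(s) + 1 \leq k$ for every finite sum $s$ of elements of $H$; in other words, no element of $\FS(H)$ is divisible by $2^{k+1}$. Now consider the $2^{k+1} + 1$ partial sums $S_a = h_0 + h_1 + \cdots + h_a$ for $a = 0, 1, \dots, 2^{k+1}$; by the finite pigeonhole principle, which is available in $\RCA_0$, two of them must share a residue modulo $2^{k+1}$, say $S_a \equiv S_b \pmod{2^{k+1}}$ with $a < b$. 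Then $h_{a+1} + \cdots + h_b = S_b - S_a$ is a nonempty element of $\FS(H)$ divisible by $2^{k+1}$, so its $\lambda$-value is at least $k+1$, contradicting the bound just derived.

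The main obstacle I expect is notational rather than conceptual: the pigeonhole step must be carried out as a bounded finite-pigeonhole argument uniform in $k$, so that the whole derivation fits inside $\RCA_0$ and delivers $\RT^1 = (\forall k \geq 1) \RT^1_k$ in one go. Beyond this, the argument is a routine unpacking of the $\lambda$-regressivity condition on $c'$ and the min-term-homogeneity condition on $\FS(H)$.
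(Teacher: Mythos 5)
Your proof is correct and takes a genuinely different route from the paper's. The paper defines the auxiliary coloring $g(n)=\lambdam(n)$ (where $\lambdam(n)=\lambda(n)-1$) when $\lambdam(n)<k$, and $g(n)=f(n)$ otherwise, and then, by tracking how $\lambda$ interacts with sums from the min-term-homogeneous set $H$ (monotonicity of the $\lambdam$-values along $H$, eventual stabilization, and the carry that forces a sum of equal-$\lambda$ elements to have larger $\lambda$), extracts an infinite $H'\subseteq\FS(H)$ on which $\lambdam\geq k$, so that $g$ agrees with $f$ there. Your shift encoding $c'(n)=c(n)+1$ with fallback $0$ isolates a single obstacle: finding some $h_i$ with $c'(h_i)\geq 1$. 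Your pigeonhole argument handles it directly: if $c'$ vanished on every $h_i$, min-term-homogeneity (applied to $\{i_1\}$ and $\{i_1,\dots,i_m\}$) would make it vanish on all of $\FS(H)$, forcing $\lambda(s)\leq k$ for every $s\in\FS(H)$; but among $S_0,\dots,S_{2^{k+1}}$ two partial sums must agree modulo $2^{k+1}$, yielding a nonempty $s\in\FS(H)$ divisible by $2^{k+1}$, a contradiction. Both arguments are direct, use no intermediate appeal to $\RT^1$, and live comfortably in $\RCA_0$ (the finite pigeonhole with an explicit bound is available there). One tradeoff worth noting: the paper's proof invokes min-term-homogeneity only on sums of a bounded number of terms (at most $8$, as the authors remark after the lemma), whereas your pigeonhole step needs sums of up to $2^{k+1}$ elements of $H$, so the required sum length grows with the number of colors; this is harmless for the lemma as stated but would matter if one wished to derive $\RT^1$ from a bounded restriction $\regHT^{\leq n}$.
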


\begin{proof}
Given $f: \Nat \to k$, with $k\geq 1$, let $g: \Nat \to \Nat$ be defined as follows:

\begin{equation*}
g(n) =
\begin{cases}
\lambdam(n) & \text{if } \lambdam(n) < k,\\
f(n) & \text{otherwise,}
\end{cases}
\end{equation*}
where $\lambdam(n) = \lambda(n)-1$ if $\lambda(n)>0$, otherwise $\lambdam(n) = 0$.

Clearly, $g$ is $f$-computable and $\lambda$-regressive, so let $H = \{h_0 < h_1 < \cdots\}$ be a solution to $\lambda\regHT$ for $g$.
First, we prove the following Claim.


\begin{claim*}
There exists an infinite $H' = \{h'_0 < h'_1 < \cdots\} \subseteq H$ such that $\lambdam(h'_{n_1} + h'_{n_2} + h'_{n_3} + h'_{n_4}) \geq k$ for all $n_1 < n_2 < n_3 < n_4$.
\end{claim*}

\begin{proof} \let\qed\relax
Let us define $J = \{j \in H\ |\ \lambdam(j) < k\}$. If $J$ contains finitely many elements, then $(H \setminus J)$ witnesses the existence of $H'$. 
Thus, let us assume $J = \{j_0 < j_1 < \cdots\}$ is infinite.

Notice that the sequence $\lambdam(j_0), \lambdam(j_1), \dots$ never decreases: suppose otherwise by way of contradiction, and let $j,j' \in J$ be such that $j<j'$ and $\lambdam(j) > \lambdam(j')$. Then 
we have $g(j) = \lambdam(j) > \lambdam(j') = \lambdam(j+j') = g(j+j')$;  this contradicts the min-term-homogeneity of $\FS(H)$. Hence $\lambdam$ on $J$ is a bounded non-decreasing function on an infinite set. 

Then we have two cases. Either for any $j \in J$ there exists $j'>j$ in $J$ such that $\lambdam(j')>\lambdam(j)$, or there exists $j \in J$ such that, for any $j'>j$ in $J$, $\lambdam(j) \geq \lambdam(j')$.
The former case can not hold, since by definition of $J$, $\lambdam(j)<k$ for any $j \in J$.

In the latter case, instead, we have some $m \in J$ such that $\lambdam(m) \geq \lambdam(j)$ for any $j$ in $J$. Since $\lambdam(j_0), \lambdam(j_1), \ldots$ is non-decreasing, $\lambdam(j) = \lambdam(m)$ holds for each $j$ in the infinite set $J' = J \setminus [0,m)$. Finally, we can show that $J'$ witnesses the existence of $H'$. Assume otherwise by way of contradiction. Then, there exist $j,j',j'',j''' \in J'$ such that $j<j'<j''<j'''$ and $\lambda'(j+j'+j''+j''')<k$. Thus $g(j+j'+j''+j''') = \lambdam(j+j'+j''+j''')$ by definition of $g$. On the other hand, since $j\in J'\subseteq J$, $\lambdam(j)<k$ and therefore $g(j) = \lambdam(j)$ by definition of $g$. Moreover, $\lambdam(j)=\lambdam(j')=\lambdam(j'')=\lambdam(j''')$ since $j,j',j'',j'''\in J'$. Therefore we have the following inequality 
$$g(j+j'+j''+j''') = \lambdam(j+j'+j''+j''') > \lambdam(j) = g(j),$$
contradicting the min-term-homogeneity of $\FS(H)$. This completes the proof of the Claim. Notice that, while  $\lambda(x)=\lambda(y)$ implies $\lambda(x+y)>\lambda(x)$ for any $x,y \in \Nat^+$, the same implication does not hold when using $\lambdam$: hence, sums of 4 elements are required in the argument above.
\end{proof}

In order to prove the lemma, let $H' = \{h'_0 < h'_1 < \cdots\}$ be as in the previous Claim. Then, for any $n_0 < n_1 < n_2$ in $\Nat^+$, we have
\begin{equation*}
\begin{split}
f(h'_0 + h'_{n_0} + h'_{n_1} + h'_{n_2}) & = g(h'_0 + h'_{n_0} + h'_{n_1} + h'_{n_2})\\
 & = g(h'_0 + h'_1 + h'_2 + h'_3) \\ 
 & = f(h'_0 + h'_1 + h'_2 + h'_3),
\end{split}
\end{equation*}

where the first and the last equalities hold by the previous Claim and by definition of $g$, while the second equality holds by min-term-homogeneity of $\FS(H)$.

Hence $\{(h'_0 + h'_{n_0} + h'_{n_1} + h'_{n_2}) \ |\ 0 < n_1 < n_2 < n_3\}$ is an infinite homogeneous set for $f$.
\end{proof}

\begin{proposition}\label{prop:regap}
$\lambda\regHT$ implies $\lambda\regHT[\ap]$ over $\RCA_0$. Moreover, $\lambda\regHT\geq_\sW \lambda\regHT[\ap]$.
\end{proposition}

\begin{proof}
The proof of Theorem~\ref{thm:canap} adapts {\em verbatim} to the case of $\lambda\regHT$. Lemma~\ref{regHTtoRT1} 
takes care of the use of $\RT^1$ in that proof, which is only needed for the implication over $\RCA_0$. 
\end{proof}

It is easy to see that the proof of Lemma \ref{regHTtoRT1} uses only sums of at most 4 terms.
However, this does not help in extending the previous Proposition to some restriction of $\lambda\regHT$ (see section \ref{sec:restrictions}), since the proof of Theorem \ref{thm:canap} still requires sums of arbitrary length.

\smallskip

The following proposition shows that the Regressive Hindman's Theorem implies Hindman's Theorem.

\begin{proposition}\label{prop:reghtht}
$\lambda\regHT$ implies $\HT$ over $\RCA_0$.
\end{proposition}

\begin{proof}

Given $f: \Nat \to k$, with $k\geq 1$, and let $g:\Nat \to k$ be as follows: 
$$
    g(n) =
    \begin{cases}
      f(n) & \mathrm{if} \; f(n) < \lambda(n), \\
      0        & \text{otherwise.}
    \end{cases}
$$
The function $g$ is $\lambda$-regressive by construction and obviously $f$-computable. Let $H=\{h_0 < h_1 <  \cdots\}\subseteq\Nat^+$ 
be an infinite set such that $FS(H)$ is
min-term-homogeneous for $g$. By Proposition~\ref{prop:regap} we can assume that $H$ is apart. 
Let $i$ be the minimum such that $\lambda(h_i) > k$. 
Let $H^{-} = H \setminus \{h_0, \dots, h_i\}$. By choice of $H^-$, $g$ behaves like $f$ on 
$\FS(H^-)$. Let $g^-$ be the $k$-colouring of numbers induced by $g$ on $H^-$.

By $\RT^1_k$ (which we can assume by Lemma~\ref{regHTtoRT1}) let $H'= \{h'_0 <  h'_1 < \cdots\}$ be an infinite subset of $H^-$ homogeneous for $g^-$. Then, 
for $\{s_1, \dots, s_m\}$ and $\{t_1, \dots, t_n\}$ non-empty subsets of $H'$, we have
\begin{equation*}
\begin{split}
f(s_1+\cdots+s_m) & = g(s_1 + \cdots + s_m)\\
 & = g(s_1)  = g^-(s_1)\\
 & = g^-(t_1)  = g(t_1) \\ 
 & = g(t_1+\cdots + t_n) \\
 & = f(t_1+\cdots + t_n),
\end{split}
\end{equation*}
since $\FS(H^-)$ is min-term-homogeneous for $g$ and $g$ coincides with $f$ on $\FS(H^-)$. 
\end{proof}

We do not know if the implication in Proposition~\ref{prop:reghtht} can be reversed. In the next section we will observe that $\RT^1_k$ can be Weihrauch-reduced to some restriction of $\lambda\regHT$ with apartness -- hence, a fortiori, it can be Weihrauch-reduced to $\lambda\regHT$ (see Proposition~\ref{prop:regHTtoHTeqn} {\em infra}).

\section{Restrictions of the Regressive Hindman's Theorem} \label{sec:restrictions}

Restrictions of Hindman's Theorem relaxing the monochromaticity requirement to particular families of 
finite sums received substantial attention in recent years (see~\cite{Car:2021} for an overview and bibliography).
Two natural families of restrictions of Hindman's Theorem are obtained by restricting the number of terms in the monochromatic sums. We introduce the needed terminology. For $X\subseteq \Nat$ and $n\in\Nat^+$ we denote by $\FS^{\leq n}(X)$ the set of all non-empty sums of at most $n$ distinct elements of $X$; we denote by $\FS^{=n}(X)$ the set of all sums of exactly $n$ distinct elements of $X$. 

\begin{definition}[Bounded Hindman's Theorems]
Let $n, k\in\Nat^+$. We denote by $\HT^{\leq n}_k$ (resp. $\HT^{=n}_k$) the following principle.
For every $c:\Nat \to k$ there exists an infinite set $H\subseteq\Nat$ such that $\FS^{\leq n}(H)$ (resp. $\FS^{=n}(H)$)
is monochromatic for $c$.\\
We use $\HT^{\leq n}$ (resp. $\HT^{=n}$) to denote $(\forall k \geq 1)\,\HT^{\leq n}_k$ (resp. $(\forall k \geq 1)\,\HT^{=n}_k$).
\end{definition}

Note that $\HT^{\leq 1}_k$, $\HT^{=1}_k$ and $\RT^1_k$ are all equivalent and strongly Weihrauch inter-reducible (by identity). 

To formulate analogous restrictions of $\lambda\regHT$ we extend the definition of min-term-homogeneity in the natural way. For $n\geq 1$, we denote by $\FIN^{\leq n}(\Nat)$ (resp. $\FIN^{=n}(\Nat)$) the set of all non-empty subsets of $\Nat$ of cardinality at most $n$ (resp. of cardinality $n$).

\begin{definition}[Min-term-homogeneity for $\FS^{\leq n},\FS^{=n}$]
Let $n\in\Nat^+$. Let $c:\Nat\to \Nat$ be a colouring and $H=\{h_0 < h_1 < \cdots \}$ an infinite subset
of $\Nat$. 
We call $\FS^{\leq n}(H)$ (resp.~$\FS^{=n}(H)$) min-term-homogeneous for $c$
if and only if, 
for all $I, J \in \FIN^{\leq n}(\Nat)$ (resp.~$I, J \in \FIN^{=n}(\Nat)$), if $\min(I)=\min(J)$ then 
$c(\sum_{i \in I} h_i) = c(\sum_{j\in J} h_j)$.
\end{definition}

We can then formulate the natural restrictions of the Regressive Hindman's Theorem obtained by relaxing the min-term-homogeneity requirement from $\FS(H)$ to $\FS^{\leq n}(H)$ or $\FS^{=n}(H)$. For example,  $\lambda\regHT^{\leq n}$ is defined as $\lambda\regHT$ with $\FS^{\leq n}(H)$ replacing $\FS(H)$. 

\begin{definition}[Bounded $\lambda$-Regressive Hindman's Theorems]
Let $n\in\Nat^+$. We denote by $\lambda\regHT^{\leq n}$ (resp.~$\lambda\regHT^{=n}$) the following principle. For all $\lambda$-regressive $c:\Nat \to \Nat$ there exists an infinite
$H\subseteq\Nat$ such that $\FS^{\leq n}(H)$ (resp.~$\FS^{=n}$) is min-term-homogeneous for $c$.
\end{definition}

Note that $\lambda\regHT^{\leq 1}$ and $\lambda\regHT^{=1}$ are trivial. 
We also point out the following obvious relations: $\lambda\regHT$ yields $\lambda\regHT^{\leq n}$ which yields
$\lambda\regHT^{=n}$ for all $n$ (both in $\RCA_0$ and by strong Weihrauch reductions) and similarly for the 
versions with the apartness condition. 
Also, for $m > n$, $\lambda\regHT^{\leq m}$ obviously yields $\lambda\regHT^{\leq n}$, while $\lambda\regHT^{=m}$ yields $\lambda\regHT^{=n}$ if $m$ is a multiple of $n$ (see the analogous results for Hindman's Theorem for sums of exactly $n$ terms in~\cite{Car-Kol-Lep-Zda:20}, Proposition 3.5).

\subsection{Bounded regressive Hindman's Theorems and Ramsey-type principles}

We compare the bounded versions of our regressive Hindman's Theorem with other prominent Ramsey-type and Hindman-type principles.

We start with the following simple Lemma showing that, for every $n\geq 2$, $\lambda\regHT^{=n}[\ap]$ implies $\RT^1$. Note that in Lemma~\ref{regHTtoRT1} we established that $\lambda\regHT$ {\em without apartness} implies $\RT^1$ and we later used this result to show that $\lambda\regHT$ implies $\lambda\regHT[\ap]$ (Proposition~\ref{prop:regap}).

\begin{lemma}\label{lem:rt1k}
Let $n\geq 2$. Over $\RCA_0$, $\lambda\regHT^{=n}[\ap]$ implies $\RT^1$. Moreover, for any $k\in\Nat^+$, we have $\RT^1_k \leq_\mathrm{\sW} \lambda\regHT^{=n}[\ap]$.
\end{lemma}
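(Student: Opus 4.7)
The plan is to prove both parts of the lemma by exhibiting, for each fixed $k \geq 1$, a strong Weihrauch reduction $\RT^1_k \leq_{\sW} \lambda\regHT^{=n}[\ap]$; the over-$\RCA_0$ implication then follows from the uniformity of the construction in $k$. The encoding mirrors the one used in the proof of Proposition \ref{prop:reghtht}: convert a finite coloring into a $\lambda$-regressive coloring, solve by $\lambda\regHT^{=n}[\ap]$, and use apartness together with min-term-homogeneity on $n$-term sums to read off an infinite $f$-monochromatic set.

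Given an instance $f : \Nat \to k$ of $\RT^1_k$, I would define the forward functional by
$$ g(m) = \begin{cases} f(m) & \text{if } f(m) < \lambda(m),\\ 0 & \text{otherwise,}\end{cases} $$
which is $\lambda$-regressive by construction. For the backward functional, suppose we are handed an apart infinite $H = \{h_0 < h_1 < \cdots\}$ such that $\FS^{=n}(H)$ is min-term-homogeneous for $g$. Because $H$ is apart, $\lambda(h_i)$ is strictly increasing (indeed $\lambda(h_i) \geq i$), so a bounded computable search locates the least $N$ with $\lambda(h_N) > k$. Re-enumerate $\{h_N, h_{N+1}, \ldots\}$ as $h'_0 < h'_1 < \cdots$ and output
$$ S = \bigl\{\, h'_0 + h'_{i_1} + \cdots + h'_{i_{n-1}} \,:\, 1 \leq i_1 < \cdots < i_{n-1} \,\bigr\}. $$
Since the backward functional consults only $H$ and the parameter $k$, the reduction is strongly Weihrauch.

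The verification rests on two simple observations. First, by apartness, for every $\sigma \in \FS^{=n}(H')$ one has $\lambda(\sigma) = \lambda(h'_0) > k$, so $f(\sigma) < k < \lambda(\sigma)$, which forces $g(\sigma) = f(\sigma)$ by the definition of $g$. Second, min-term-homogeneity passes from $H$ to the subsequence $H' = \{h'_0 < h'_1 < \cdots\}$, since the monotone re-indexing map preserves the ``same minimum index'' relation. Combining these, $g$ is constant on the sums in $S$ (they all share minimum index $0$ in the $H'$-enumeration), hence so is $f$; apartness further ensures that distinct index tuples give distinct sums, so $S$ is infinite. Thus $S$ is an infinite $f$-monochromatic set.

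I do not anticipate any deep obstacle. The main subtlety is checking that min-term-homogeneity is inherited under subsequence restriction, since the defining condition is sensitive to the chosen enumeration; this is immediate from monotonicity of the inclusion map but worth making explicit. The apartness hypothesis is equally essential: it underpins both the identity $\lambda(\sigma) = \lambda(h'_0)$ on $n$-sums and the injectivity needed to make $S$ infinite. Since the construction is fully uniform in $k$, the same argument, carried out inside $\RCA_0 + \lambda\regHT^{=n}[\ap]$, yields $\RT^1$.
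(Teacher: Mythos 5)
Your proof is correct, but it follows a genuinely different path from the paper's. The paper encodes the colouring $c$ via the $\mu$-function, setting $f(m)=c(\mu(m))$ when $\lambda(m)>k$ and $0$ otherwise; given an apart min-term-homogeneous $H$, it then outputs the set $M=\{\mu(h_{k+2}),\mu(h_{k+3}),\dots\}$, using that $\mu$ of an apart $n$-sum is the $\mu$ of its largest summand, so that ranging over the last summand while fixing the others produces $c$-monochromatic values of $\mu$. You instead cap $f$ itself to force $\lambda$-regressivity (as in Propositions \ref{prop:reghtht} and \ref{prop:regHTtoHTeqn}) and output a family of $n$-sums from a tail of $H$ that all share their minimum term $h'_0$; min-term-homogeneity makes these $g$-monochromatic, and apartness ensures $\lambda(\sigma)>k$ on these sums so $g$ agrees with $f$ there. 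Both are valid strong Weihrauch reductions. The paper's trick is a little leaner --- no initial-segment search beyond index $k+1$ is needed and the output is simply a set of $\mu$-values --- and it is the same $\mu$-encoding device that recurs elsewhere in the paper. Your route has the advantage of being a near-verbatim specialization of the argument that already reduces $\HT^{=n}_k[\ap]$ to $\lambda\regHT^{=n}[\ap]$, so it emphasizes the structural uniformity across the reductions. One small imprecision worth tightening: it is not true that $\lambda(\sigma)=\lambda(h'_0)$ for every $\sigma\in\FS^{=n}(H')$ (only for those with $h'_0$ as a summand, which is all you actually use); in general one has $\lambda(\sigma)\geq\lambda(h'_0)>k$, which is what the argument requires.
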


\begin{proof}
We give the proof for $n=2$ for ease of readability. Let $f:\Nat \to k$ be given, with $k \geq 1$. Define $g:\Nat \to k$ as follows.
$$
    g(m) =
    \begin{cases}
      0 & \mathrm{if} \; \lambda(m) \leq k, \\
      f(\mu(m))	&	\text{otherwise.}
    \end{cases}
$$
Clearly $g$ is $\lambda$-regressive and $f$-computable in a uniform way. 
Let $H=\{h_0 < h_1 < \cdots\}\subseteq\Nat^+$ be an infinite apart set of positive integers such that $\FS^{=2}(H)$ is min-term-homogeneous for $g$.

By the apartness condition, for all $h \in H\setminus \{h_0, h_1, \ldots,h_k\}$ we have $g(h) = f(\mu(h))$. Then it is easy to see that $M=\{\mu(h_{k+2}),\mu(h_{k+3}),\ldots\}$ is an infinite $f$-homogeneous set of colour $f(\mu(h_{k+2}))$ since, for any $i$, $f(\mu(h_{k+2+i})) = g(h_{k+1}+h_{k+2+i}) = g(h_{k+1}+h_{k+2}) = f(\mu(h_{k+2}))$.
\end{proof}


The following proposition relates the principles $\lambda\regHT^{= n}[\ap]$ (respectively $\lambda\regHT^{\leq n}[\ap]$) with the principles $\HT^{= n}_k[\ap]$ (respectively $\HT^{\leq n}_k[\ap]$). 
The proof is essentially the same as the proof of Proposition~\ref{prop:reghtht}. 

\begin{proposition}\label{prop:regHTtoHTeqn}
Let $n\geq 2$. 
\begin{enumerate}
\item $\lambda\regHT^{= n}[\ap]$ implies $\HT^{= n}[\ap]$ over $\RCA_0$. Moreover, for any $k\in\Nat^+$, $\lambda\regHT^{= n}[\ap] \geq_\mathrm{c}  \HT^{= n}_k[\ap]$.
\item $\lambda\regHT^{\leq n}[\ap]$ implies $\HT^{\leq n}[\ap]$ over $\RCA_0$. Moreover, for any $k\in\Nat^+$, $\lambda\regHT^{\leq n}[\ap] \geq_\compred  \HT^{\leq n}_k[\ap]$.
\end{enumerate}
\end{proposition}

\begin{proof}
We prove the second point, the proof of the first point being completely analogous. Given $f: \Nat \to k$, with $k\in\Nat^+$, let $g:\Nat \to k$ be as follows: 
$$
    g(m) =
    \begin{cases}
      f(m) & \mathrm{if} \; f(m) < \lambda(m), \\
      0        & \text{otherwise.}
    \end{cases}
$$
The function $g$ is $\lambda$-regressive and $f$-computable. 
By $\lambda\regHT^{\leq n}[\ap]$ let $H=\{h_0 < h_1 <  \cdots\}\subseteq\Nat^+$ be an infinite apart set such that $\FS^{\leq n}(H)$ is
min-term-homogeneous for $g$. Let $g': H \setminus \{h_0, \dots, h_{k-1}\} \to k$ be defined as 
$g'(h_i)=g(h_i + h_{i+1} + \cdots + h_{i+n-1})$. 

By $\RT^1_k$, let $H' \subseteq H$ be an infinite homogeneous set for $g'$. For the sake of establishing the implication over $\RCA_0$, 
recall that $\RT^1$ follows from $\lambda\regHT^{=2}[\ap]$ by Lemma \ref{lem:rt1k} and therefore also from $\lambda\regHT^{\leq n}[\ap]$ for any $n\geq 2$. For the sake of the computable reduction result, just notice that for each 
fixed $k\in \Nat^+$, $\RT^1_k$ is computably true. Then, 
for $\{s_1, \dots, s_p\}$ and $\{t_1, \dots, t_q\}$ non-empty subsets of $H'$, with $p,q \leq n$ and $s_1 < \dots < s_p$, $t_1 < \dots < t_q$, we have
\begin{equation*}
\begin{split}
f(s_1+\cdots+s_p) & = g(s_1+\cdots+s_p)\\  
& \stackrel{(*)}{=} g(s_1) = g'(s_1) \\
& = g'(t_1) = g(t_1) \\
& \stackrel{(**)}{=} g(t_1+\cdots+t_q)\\ 
& = f(t_1+\dots+t_q),
\end{split}
\end{equation*}

where the equalities dubbed by $(*)$ and $(**)$ hold by the min-term-homogeneity of $\FS^{\leq n}(H)$ for $g$.
This shows that $H'$ is an apart solution to $\HT^{\leq n}_k$ for $f$.
\end{proof}

\begin{remark}\label{rem:reductions}
The previous proof gives us a hint as how to extend the reduction to $\HT^{\leq n}[\ap]$, i.e. to the universally-quantified principles $(\forall k \geq 1) \ \HT^{\leq n}_k[\ap]$. In that case, the number of colours is not given as part of the instance, and it cannot be computably inferred from the instance $X$ of the principle $\HT^{\leq n}[\ap]$ (see the discussion in~\cite{Dza-Mum:22} p.~54 for more details on this issue). Nevertheless, we can easily obtain a computable reduction by just observing that the proof of Proposition~\ref{prop:regHTtoHTeqn} provides us, for any $k \geq 1$, with both an $X$-computable procedure giving us an instance $\widehat{X}$ of $\lambda\regHT^{\leq n}[\ap]$, and an $(X \oplus \widehat{Y})$-computable procedure transforming a solution $\widehat{Y}$ for $\widehat{X}$ to a solution for $X$: so, even if we do not know the actual value of $k$, we know that the two procedures witnessing the computable reduction do exist. Thus, we can conclude that 
for any $n \geq 2$, $\lambda\regHT^{\leq n}[\ap] \geq_\compred  \HT^{\leq n}[\ap]$. It is not straightforward to improve this result to a Weihrauch reduction.

The same argument also applies to the case of $\lambda\regHT^{=n}[\ap]$, so that we have that 
for any $n \geq 2$, $\lambda\regHT^{=n}[\ap] \geq_\compred  \HT^{=n}[\ap]$.
\end{remark}

Also, we point out that a proof of $\lambda\regHT^{\leq 2}$ that does not also prove $\HT$ (or, more technically, a separation over $\RCA_0$ of these two principles) would answer Question 12 from \cite{Hin-Lea-Str:03}.

It is worth noticing that a further slight adaptation of the proof of Proposition \ref{prop:regHTtoHTeqn} gives a direct proof
of $\RT^n$ from $\REG^n$ and also shows that $\REG^{n} \geq_\compred \RT^{n}_k$. 
The following definition can be used for computably reducing $\RT^n_k$ to $\REG^n$ (for $n\geq 2$ and $k\in\Nat^+$). 
Given $c: [\Nat]^n \to k$, with $k\in\Nat^+$, let $c^+: [\Nat]^n \to k$ be as follows:

\begin{equation*}
c^+(x_1,\ldots,x_n) =
\begin{cases}
0 & \text{if } x_1 \leq k,\\
c(x_1,\ldots,x_n) & \text{otherwise.}
\end{cases}
\end{equation*}

We can thus state the following Proposition. 

\begin{proposition}\label{prop:regrt}
For any $n\geq 2$ and $k\in\Nat^+$, $\RT^n_k \leq_{\compred} \REG^n$.
\end{proposition}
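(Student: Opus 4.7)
The plan is to mirror the proof of Proposition~\ref{prop:regHTtoHTeqn}. Given an instance $c:[\Nat]^n\to k$ of $\RT^n_k$, the forward functional produces the regressive function
\[
 f(I)=\begin{cases} c(I) & \text{if }\min(I)>k,\\ 0 & \text{otherwise,}\end{cases}
\]
which is regressive because $c(I)<k<\min(I)$ in the first case. Applying $\REG^n$ to $f$ returns an infinite min-homogeneous set $H$; discarding the finitely many elements of $H$ that are at most $k$ yields a tail $H^-$ on which $f\equiv c$, so min-homogeneity induces a well-defined coloring $g:H^-\to k$ by $g(h):=c(I)$ for any $I\in[H^-]^n$ with $\min(I)=h$.

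The backward functional then extracts an infinite $g$-monochromatic $H'\subseteq H^-$, which is automatically $c$-homogeneous and a solution to the original instance. Producing $H'$ from $g$ is an application of $\RT^1_k$, and a direct Ramsey analog of Lemma~\ref{regHTtoRT1} gives $\RT^1_k\leq_\W\REG^n$: encode $\chi:\Nat\to k$ as the regressive function $\hat f(I)=\chi(\max(I))$ when $\chi(\max(I))<\min(I)$ and $\hat f(I)=0$ otherwise; for any min-homogeneous set $\hat H$ for $\hat f$ whose minimum exceeds $k$, the value $\chi(\max(I))$ is forced to agree on all $I\in[\hat H]^n$ sharing the same minimum, so $\chi$ is constant on a cofinal tail of $\hat H$. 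Composing the two Weihrauch reductions by transitivity of $\leq_\W$ packages them into a single Weihrauch reduction $\RT^n_k\leq_\W\REG^n$.

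The main subtlety is that the $\RT^1_k$ step cannot be executed by the backward functional on its own, so the reduction genuinely requires absorbing the internal pigeonhole step into the $\REG^n$-oracle via the composition mechanism just described; this is the same device that underlies the proof of Proposition~\ref{prop:regHTtoHTeqn}. Beyond this, the plan only needs the routine verifications that $f$ and $\hat f$ are regressive and that min-homogeneity delivers the claimed consistency conditions, both of which are immediate from the definitions.
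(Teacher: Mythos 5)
Your proposal mirrors the paper's proof essentially verbatim: you define the same cut-off regressive coloring (the paper's $c^+$), apply $\REG^n$ once, read off the induced $k$-coloring of the min-homogeneous set determined by minima, and then homogenize it via $\RT^1_k$, itself obtained from $\REG^n$ by the same adaptation of Lemma~\ref{regHTtoRT1} that the paper cites. One small caveat on your wording: what combines the two steps is a sequential composition of oracle calls (the $\RT^1_k$ instance only becomes available after the $\REG^n$ output is known), not transitivity of $\leq_\W$; the paper leaves the same point implicit.
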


Note that by $\HT^{=n}_k[\ap] \leq_\sW \RT^n_k$ (see~\cite{Car-Kol-Lep-Zda:20}), the above also implies $\HT^{=n}_k[\ap] \leq_{\compred} \REG^n$ for any $n \geq 2$ and $k \in \Nat^+$. 

\paragraph{Equivalents of $\ACA_0$.}

Proposition~\ref{prop:regHTtoHTeqn}, coupled with the fact that $\HT^{=3}_2[\ap]$ implies $\ACA_0$ (Theorem 3.3 in~\cite{Car-Kol-Lep-Zda:20}), yields the following corollary.

\begin{corollary}
$\lambda\regHT^{=3}[\ap]$ implies $\ACA_0$ over $\RCA_0$.
\end{corollary}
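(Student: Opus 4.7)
The plan is to chain two already-established implications, both over $\RCA_0$. First, I would invoke the $=n$ transfer result, Proposition \ref{prop:regHTtoHTleqn}, specialised to $n=3$ and $k=2$. That proposition gives, for every fixed $k$, that $\lambda\regHT^{=n}[\ap]$ implies $\HT^{=n}_k[\ap]$; taking $n=3$ and $k=2$ yields $\lambda\regHT^{=3}[\ap] \to \HT^{=3}_2[\ap]$ over $\RCA_0$. No additional argument is needed here beyond instantiating the parameters.

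Second, I would appeal to Theorem 3.3 of \cite{Car-Kol-Lep-Zda:20}, cited in the paper's introduction as the result that sums of at most (respectively exactly) three terms with apartness already give a weak-yet-strong principle. In its $=3$ form it asserts that $\HT^{=3}_2[\ap]$ implies $\ACA_0$ over $\RCA_0$. Composing this with the previous step yields $\lambda\regHT^{=3}[\ap] \to \ACA_0$, which is the desired corollary.

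There is no real obstacle: the combinatorial content sits entirely in the two results being composed, and transitivity of implication over $\RCA_0$ handles the rest. The only point one must be careful about is matching parameters: Proposition \ref{prop:regHTtoHTleqn} is phrased as ``for any $k\geq 1$'', so it is legitimate to specialise to the $k=2$ required by the cited lower bound, and the apartness hypothesis is preserved along the way since both the source and the target principle carry $[\ap]$.
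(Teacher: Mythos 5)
Your proof is correct and follows essentially the same two-step route as the paper: compose the $=n$ transfer result ($\lambda\regHT^{=3}[\ap]\Rightarrow\HT^{=3}_2[\ap]$) with Theorem~3.3 of \cite{Car-Kol-Lep-Zda:20}. You in fact cite the right proposition (the $\lambda\regHT^{=n}[\ap]\Rightarrow\HT^{=n}[\ap]$ one), whereas the paper's own proof line references the $\leq n$ version by what appears to be a label slip.
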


\begin{proof}
From Theorem 3.3 in~\cite{Car-Kol-Lep-Zda:20} and Proposition~\ref{prop:regHTtoHTeqn} above.
\end{proof}

We have the following reversal, showing that $\lambda\regHT^{= 3}[\ap]$ is a ``weak yet strong'' restriction of Taylor's Theorem in the sense of~\cite{Car:16:wys}.
The result is analogous to the implication from $\RT^n_k$ to $\HT^{=n}_k$ (see~\cite{Car-Kol-Lep-Zda:20}).

\begin{theorem}\label{thm:acareght}
Let $n\in\Nat^+$. $\ACA_0$ proves $\lambda\regHT^{= n}[\ap]$.
Moreover, $\lambda\regHT^{= n}[\ap] \leq_{\mathrm{\sW}} \REG^n$.
\end{theorem}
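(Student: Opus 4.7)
The plan is to reduce $\lambda\regHT^{=n}[\ap]$ strongly Weihrauch to $\REG^n$. This implies the first assertion because $\REG^n$ is provable in $\ACA_0$ by Theorem~\ref{thm:mileti} and the reduction is uniform, so it relativizes to any infinite oracle $X \subseteq \Nat$. The case $n=1$ is trivial (any infinite subset of $\Nat^+$ is min-term-homogeneous on $\FS^{=1}$ and satisfies apartness), so I focus on $n \geq 2$. The guiding idea is to encode a $\lambda$-regressive $c:\Nat\to\Nat$ as a regressive $c':[\Nat]^n \to \Nat$ by evaluating $c$ on sums of an apart sequence of ``building blocks'' chosen so that the $\lambda$-value of each such sum coincides with the index of its least term.

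The natural choice is $b_i := 2^i$: here $\lambda(b_i)=\mu(b_i)=i$, so $(b_i)_{i\in\Nat}$ is apart, and for any $I = \{i_1 < \cdots < i_n\} \in [\Nat]^n$ the binary representations of the $b_{i_k}$ are pairwise disjoint, giving $\lambda(\sigma_I) = i_1 = \min I$ for $\sigma_I := \sum_{k=1}^n 2^{i_k}$. I then define the forward Turing functional $\Phi$ by $\Phi(c)(I) := c(\sigma_I) =: c'(I)$. The $\lambda$-regressivity of $c$ combined with $\lambda(\sigma_I) = \min I$ yields $c'(I) < \min I$ when $\min I > 0$, and $c'(I) = 0$ when $\min I = 0$, so $c'$ is a legitimate regressive instance of $\REG^n$.

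Feeding a solution $H' = \{h'_0 < h'_1 < \cdots\}$ of $\REG^n$ for $c'$ into the backward functional $\Psi$, I output $H := \{2^{h'_0} < 2^{h'_1} < \cdots\} \subseteq \Nat^+$. Then $H$ is apart because its elements are powers of two with strictly increasing exponents. To verify min-term-homogeneity of $\FS^{=n}(H)$, let $I, J \in \FIN^{=n}(\Nat)$ with $\min I = \min J$, and set $I^* := \{h'_i : i \in I\}$ and $J^* := \{h'_j : j \in J\}$, both in $[H']^n$; these satisfy $\min I^* = h'_{\min I} = h'_{\min J} = \min J^*$, and by construction $\sum_{i \in I} 2^{h'_i} = \sigma_{I^*}$ (similarly for $J^*$). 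Min-homogeneity of $H'$ for $c'$ then delivers $c(\sigma_{I^*}) = c(\sigma_{J^*})$, which is precisely the required min-term-homogeneity on $\FS^{=n}(H)$. Since $\Psi$ depends only on $H'$ and not on $c$, the reduction is strongly Weihrauch.

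I do not anticipate a serious obstacle: the key identity $\lambda(\sigma_I) = \min I$ on sums of distinct powers of two does all the work, simultaneously forcing apartness of $H$ and regressivity of $c'$ with the correct matching of minima. The only point requiring minor care is the borderline case $\min I = 0$, which is accommodated by the convention that both $c$ and $c'$ vanish wherever $\lambda$ does.
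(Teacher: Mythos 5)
Your proposal is correct and follows essentially the same route as the paper: both reduce $\lambda\regHT^{=n}[\ap]$ to $\REG^n$ by composing $c$ with sums of $n$ elements drawn from an apart set, using apartness to ensure that the $\lambda$-value of such a sum is governed by its least term (hence regressivity of the induced coloring on $n$-tuples), and then reading min-term-homogeneity off the min-homogeneous $\REG^n$-solution. The only difference is cosmetic: the paper argues with a generic apart set $S$, while your canonical choice $b_i = 2^i$ gives the exact identity $\lambda(\sigma_I) = \min I$ and makes the Turing functionals $\Phi, \Psi$ fully explicit, which is if anything a cleaner way to exhibit the strong Weihrauch reduction.
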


\begin{proof}
We give the proof for $n=2$ for ease of readability. 


Let $f:\Nat \to \Nat$ be $\lambda$-regressive. 
Let $g:[\Nat]^2 \to \Nat$ be defined as follows: $g(x,y) = f(2^x+2^y)$. 
The function $g$ is regressive since $f$ is 
$\lambda$-regressive. Recall that $\REG^2$ is provable in $\ACA_0$.
Let $H\subseteq \Nat^+$ be a min-homogeneous solution to $\REG^2$ for $g$. Let $\hat{H} = \{ 2^h \,:\, h \in H\}$. Obviously $\hat{H}$ is apart. It is easy to see that $\FS^{= 2}(\hat{H})$ is min-term-homogeneous for $f$: let $2^h < 2^{h'} < 2^{h''}$ be elements of $\hat{H}$. Then 
$$ f(2^h+2^{h'}) = g(h,h')=g(h,h'') = f(2^h+2^{h''}).$$

\end{proof}

We do not know if the reduction in Theorem~\ref{thm:acareght} can be reversed. 

We next show that $\lambda\regHT^{=2}[\ap]$ already implies Arithmetical Comprehension. The proof is reminiscent of the proof that $\HT^{\leq 2}_2[\ap]$ implies $\ACA_0$ in~\cite{Car-Kol-Lep-Zda:20}, but the use of $\lambda$-regressive colourings allows us to avoid the parity argument used in that proof. As happens in the proofs of independence of combinatorial principles from 
Peano Arithmetic \cite{Kan-McA:87}, in the present setting the use of regressive colourings simplifies the combinatorics.  

Let $\RAN$ be the $\Pi^1_2$ principle stating that for every injective function $f:\Nat\to \Nat$ the range of $f$ (denoted by $\rho(f)$) exists. It is well-known 
that $\RAN$ is equivalent to $\ACA_0$ (see~\cite{Sim:SOSOA}).

\begin{theorem}\label{thm:regtoaca}
Let $n\geq 2$.
$\lambda\regHT^{= n}[\ap]$ implies $\ACA_0$ over $\RCA_0$. Moreover, 
$$\lambda\regHT^{=n}[\ap]\geq_\W \RAN.$$
\end{theorem}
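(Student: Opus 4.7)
\medskip

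\noindent\textbf{Proof plan.} The plan is to show $\RAN \leq_{\sW} \lambda\regHT^{=n}[\ap]$: given an injective $f:\Nat\to\Nat$, I construct a $\lambda$-regressive coloring $c$ such that any apart min-term-homogeneous solution $H$ for $c$ computes $\rho(f)$. The coloring should count how many elements of a $\lambda$-determined initial segment have been enumerated by $f$ before a $\mu$-determined stage. Concretely, define
\[
c(m)=
\begin{cases}
0 & \text{if } \lambda(m) < 2,\\
\bigl|\,f[\{0,\dots,\mu(m)\}]\cap\{0,\dots,\lambda(m)-2\}\,\bigr| & \text{otherwise.}
\end{cases}
\]
Since the count is bounded by $\lambda(m)-1$, this is $\lambda$-regressive. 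The construction is uniform in (the index of) $f$, which is what yields a strong Weihrauch reduction. I will present the argument for $n=2$ and remark that it extends verbatim to arbitrary $n\geq 2$ by replacing $h_{i+1}$ with $h_{i+n-1}$ below.

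Apply $\lambda\regHT^{=2}[\ap]$ to obtain an apart set $H=\{h_0<h_1<\dots\}\subseteq\Nat^+$ with $\FS^{=2}(H)$ min-term-homogeneous for $c$. The first key point is that apartness forces $\lambda(h_i+h_j)=\lambda(h_i)$ and $\mu(h_i+h_j)=\mu(h_j)$ for all $i<j$. Together with min-term-homogeneity this means that for each $i$ with $\lambda(h_i)\geq 2$, the quantity
\[
N(i,j):=\bigl|\,f[\{0,\dots,\mu(h_j)\}]\cap\{0,\dots,\lambda(h_i)-2\}\,\bigr|
\]
is constant in $j>i$. However, $N(i,j)$ is nondecreasing in $j$ and its limit is $|\rho(f)\cap\{0,\dots,\lambda(h_i)-2\}|$. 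Constancy together with the inclusion $f[\{0,\dots,\mu(h_{i+1})\}]\cap\{0,\dots,\lambda(h_i)-2\}\subseteq\rho(f)\cap\{0,\dots,\lambda(h_i)-2\}$ of sets of equal (finite) cardinality forces the equality
\[
\rho(f)\cap\{0,\dots,\lambda(h_i)-2\}=f[\{0,\dots,\mu(h_{i+1})\}]\cap\{0,\dots,\lambda(h_i)-2\}.
\]

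This identity gives the reduction immediately. By apartness, $\lambda(h_{i+1})>\mu(h_i)\geq\lambda(h_i)$, so $\lambda(h_i)$ is strictly increasing; hence for every target $t\in\Nat$ one can computably (from $H$) find an $i$ with $\lambda(h_i)\geq t+2$, and then decide $t\in\rho(f)$ by checking whether $t\in f[\{0,\dots,\mu(h_{i+1})\}]$. This procedure is a $H$-computable enumeration of $\rho(f)$, and it is uniform in $f$ and $H$, yielding the strong Weihrauch reduction $\RAN\leq_{\sW}\lambda\regHT^{=n}[\ap]$; the corresponding implication over $\RCA_0$ then gives $\ACA_0$.

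The main subtlety, and the one place where the $\lambda$-regressive framework is essential, is matching the range of the count to the budget $\lambda(m)$: taking the initial segment up to $\lambda(m)-2$ (rather than the naive $\lambda(m)-1$ or $\lambda(m)$) is what keeps $c(m)<\lambda(m)$ while still letting $\lambda(h_i)\to\infty$ cover all targets. Everything else is a cardinality/monotonicity observation. No parity or two-coloring tricks of the sort used for $\HT^{\leq 2}_2[\ap]$ in \cite{Car-Kol-Lep-Zda:20} are needed, which matches the paper's remark that $\lambda$-regressivity simplifies the combinatorics in the spirit of \cite{Kan-McA:87}.
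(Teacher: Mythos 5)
Your proof is correct and takes a genuinely different route from the paper's, though both rest on the same backbone (apartness forces $\lambda(h_i+h_j)=\lambda(h_i)$ and $\mu(h_i+h_j)=\mu(h_j)$, and min-term-homogeneity fixes $c$ along the row indexed by $h_i$). The paper defines $c(m)$ to be the specific ``latest'' value of $f$ below $\lambda(m)$ hit on the index interval $[\lambda(m),\mu(m))$, assumes without loss of generality that $f$ avoids $0$, and uses injectivity of $f$ plus an explicit instance of strong $\Sigma^0_1$-bounding to produce an $h_j$ witnessing a color change if some $x<\lambda(h_i)$ in $\rho(f)$ were missed by $f\restriction[0,\mu(h_{i+1}))$. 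Your coloring instead \emph{counts} $\bigl|\,f[\{0,\dots,\mu(m)\}]\cap\{0,\dots,\lambda(m)-2\}\,\bigr|$, which is monotone in $\mu(m)$, so constancy of the color along $\FS^{=n}(H)$ plus the containment of the corresponding finite sets of equal cardinality forces set equality outright. This is cleaner: no appeal to bounding is needed, since for each individual $t\in\rho(f)\cap\{0,\dots,\lambda(h_i)-2\}$ one simply chooses $j$ with $\mu(h_j)$ at least the $f$-preimage of $t$ and applies the pigeonhole equality of the two cardinality-$N$ sets; and no case analysis on $f$ hitting $0$ is needed. The $\lambda$-regressivity check ($c(m)\leq\lambda(m)-1<\lambda(m)$ when $\lambda(m)\geq 2$) and the decoding step (find $h_i$ with $\lambda(h_i)\geq t+2$, then test $t\in f[\{0,\dots,\mu(h_{i+1})\}]$) are both sound, and the extension to general $n$ by replacing $h_{i+1}$ with $h_{i+1},\dots,h_{i+n-1}$ works exactly as you say, since min-term-homogeneity lets you vary only the last summand. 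One caveat, which applies equally to the paper's own argument: your backward functional queries $f$ when checking $t\in f[\{0,\dots,\mu(h_{i+1})\}]$, so as written it shows $\RAN\leq_{\W}\lambda\regHT^{=n}[\ap]$ with $\Psi$ depending on $X\oplus\hat{Y}$; whether this genuinely yields the claimed $\leq_{\sW}$ is a point you should flag rather than assert, since the solution $H$ alone does not obviously encode $\rho(f)$ without further modification of the coloring.
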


\begin{proof}
We give the proof for $n=2$. The easy adaptation to larger values is left to the reader. 

Let $f:\Nat\to\Nat$ be injective. For technical convenience and without loss of generality we assume that $f$ never takes the value $0$. We show, using $\lambda\regHT^{= 2}[\ap]$, that $\rho(f)$ (the range of $f$) exists. 

Define $c:\Nat\to \Nat$ as follows. If $m$ is a power of $2$ then $c(m)=0$. Else 
$c(m)=$ the unique $x$ such that $x < \lambda(m)$ 
and there exists $j \in [\lambda(m), \mu(m))$ such that $f(j)=x$ and for all $j < j' < \mu(m)$, 
$f(j') \geq \lambda(m)$. If no such $x$ exists, we set $c(m)=0$. 

Intuitively $c$ checks whether there are values below $\lambda(m)$ in the range of $f$ restricted
to $[\lambda(m), \mu(m))$. If any, it returns the latest one, i.e., the one obtained as image of the 
maximal $j \in [\lambda(m), \mu(m))$ that is mapped by $f$ below $\lambda(m)$). 
In other words, $x$ is the ``last'' element below $\lambda(m)$
in the range of $f$ restricted to $[\lambda(m), \mu(m))$. 

The function $c$ is computable in $f$ and $\lambda$-regressive.

Let $H = \{ h_0 < h_1 < \cdots \}\subseteq\Nat^+$ be an apart solution to $\lambda\regHT^{= 2}$ for $c$. Without loss of generality 
we can assume that $\lambda(h_0) > 1$, since $H$ is apart. Let $h_i \in H$. 

We claim that if $x < \lambda(h_i)$ and $x$ is in the range of $f$ then $x$ is in the range of $f$ restricted to $[0, \mu(h_{i+1}))$. 

We prove the claim as follows. Suppose, by way of contradiction, that there exist $h_i\in H$ and $x< \lambda(h_i)$ such that $x \in \rho(f)$ but $x \notin f([0, \mu(h_{i+1}))$. Let $b$ be the true bound for the elements in the range of $f$
smaller than $\lambda(h_i)$, i.e., $b$ is such that if $n < \lambda(h_i)$ and $n\in \rho(f)$, then $n < b$. The existence of $b$ follows in $\RCA_0$ from strong $\Sigma^0_1$-bounding (see 
\cite{Sim:SOSOA}, Exercise II.3.14):
$$ \forall n \exists b \forall i < n ( \exists j (f(j)=i) \to \exists j < b (f(j)=i)),$$
where we take $n=\lambda(h_i)$.

Let $h_j$ in $H$ be such that $h_j > h_{i+1}$ and $\mu(h_j) \geq b$. Such an $h_j$ exists since $H$ is infinite. 

Then, by min-term-homogeneity of $\FS^{=2}(H)$, $c(h_i + h_{i+1}) = c(h_i + h_j)$. But by choice of $h_i, x$ and $h_j$, 
and the definition of $c$, it must be the case that $c(h_i + h_{i+1}) \neq c(h_i + h_j)$. To see this, first note that, 
by apartness of $H$, the following equalities hold: 
$$\lambda(h_i + h_{i+1}) = \lambda(h_i) = \lambda(h_i+h_j),\ \mu(h_i + h_{i+1}) = \mu(h_{i+1}),\ \mu(h_i + h_j) = \mu(h_j).$$ 
Then observe that
$c(h_i + h_j) > 0$: by hypothesis $f^{-1}(x) \in [\mu(h_{i+1}), b)$ (recall that $f$ is injective), therefore $x$ is a value of $f$ below $\lambda(h_i+h_j)$ whose pre-image under $f$ is in $[\lambda(h_i+h_j),\mu(h_i+h_j))$, i.e.
in $[\lambda(h_i),\mu(h_j))$. Suppose now that $c(h_i + h_{i+1}) = z > 0$. Then, by definition of
$c$, it must be the case that $z < \lambda(h_i + h_{i+1})$, i.e., $z < \lambda(h_i)$, and $f^{-1}(z)$ is in 
$[\lambda(h_i+h_{i+1}), \mu(h_i+h_{i+1}))$, 
i.e. in $[\lambda(h_i), \mu(h_{i+1}))$. This $z$ cannot be the value of $c(h_i + h_j)$, since 
by hypothesis and by choice of $b$, we have $x < \lambda(h_i)$ and $f^{-1}(x)$ is in $[\mu(h_{i+1}), b)$, 
hence in $[\lambda(h_i+h_j), \mu(h_i+h_j))$. Thus $z$ cannot be the value of $f$ below $\lambda(h_i)$
with maximal pre-image under $f$ in $[\lambda(h_i+h_j),\mu(h_i+h_j))$ as the definition of $c(h_i+h_j)$ requires, 
since $f^{-1}(z) < \mu(h_{i+1}) \leq f^{-1}(x)$ and $f$ is injective. This concludes our reasoning by way of contradiction 
and hence establishes the claim that values in the range of $f$ below $\lambda(h_i)$ appear as values of $f$ applied to 
arguments smaller than $\mu(h_{i+1})$.

In view of the just established claim it is easy to see that the range of $f$ can be decided computably in $H$ as follows. Given $x$, pick any $h_i \in H$ such that 
$x < \lambda(h_i)$ and check whether $x$ appears in $f([0, \mu(h_{i+1}))$. 
\end{proof}

Theorem~\ref{thm:regtoaca} for the case of $n=2$ should be contrasted with the fact that $\HT^{=2}_2[\ap]$ follows easily from $\RT^2_2$ and is therefore strictly weaker than $\ACA_0$, while
$\HT^{=3}_2[\ap]$ implies $\ACA_0$ as proved in~\cite{Car-Kol-Lep-Zda:20}. 
The situation matches the one among $\REG^2$, $\RT^3_2$ and $\RT^2_2$ (see Theorem~\ref{thm:mileti}). 

The proof of Theorem~\ref{thm:regtoaca} can be recast in a straightforward way to show that there exists 
a computable $\lambda$-regressive colouring such that all apart solutions to $\lambda\regHT^{=2}$ for that colouring
compute the first Turing Jump $\emptyset'$. Analogously, the reduction can be cast in terms of the $\Pi^1_2$-principle
$\forall X \exists Y ( Y = (X)')$ expressing closure under the Turing Jump, rather than in terms of $\RAN$. 


The next theorem summarizes the implications over $\RCA_0$ for the Regressive Hindman's theorems
for sums of exactly $n$ elements, compared with other prominent Ramsey-theoretic principles (see  
Figure~\ref{fig:implicationsHT}).

\begin{theorem}
The following are equivalent over $\RCA_0$. 
\begin{enumerate}
\item $\ACA_0$.
\item $\REG^n$, for any fixed $n\geq 2$.
\item $\RT^n_k$, for any fixed $n\geq 3$, $k \geq 2$.
\item $\HT^{=n}_k[\ap]$, for any fixed $n \geq 3$, $k \geq 2$. 
\item $\lambda\regHT^{=n}[\ap]$, for any fixed $n \geq 2$.
\end{enumerate}
\end{theorem}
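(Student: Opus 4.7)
The plan is to fit items $(4)$ and $(5)$ into the equivalence class $\{(1),(2),(3)\}$ already recorded in Theorem \ref{thm:mileti}, so that the whole statement collapses into a single equivalence class over $\RCA_0$. Concretely, I would establish the cycle $(1)\Rightarrow(5)\Rightarrow(4)\Rightarrow(1)$.

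For $(1)\Leftrightarrow(5)$ the two main results of this section suffice. Theorem \ref{thm:acareght} supplies $\ACA_0\Rightarrow\lambda\regHT^{=n}[\ap]$ for every $n\geq 2$: it strongly Weihrauch-reduces $\lambda\regHT^{=n}[\ap]$ to $\REG^n$, and $\REG^n$ is provable in $\ACA_0$ by Theorem \ref{thm:mileti}. In the opposite direction, Theorem \ref{thm:regtoaca} strongly Weihrauch-reduces $\RAN$ (equivalent to $\ACA_0$) to $\lambda\regHT^{=n}[\ap]$, yielding $\lambda\regHT^{=n}[\ap]\Rightarrow\ACA_0$ for every $n\geq 2$.

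For $(5)\Rightarrow(4)$ I would invoke Proposition \ref{prop:regHTtoHTleqn} directly: it establishes $\lambda\regHT^{=n}[\ap]\geq_\W \HT^{=n}_k[\ap]$ for every $n\geq 2$ and every $k\geq 1$, which in particular covers $n\geq 3$. For $(4)\Rightarrow(1)$ the base case $n=3$, $k=2$ is Theorem~3.3 of \cite{Car-Kol-Lep-Zda:20}, already cited in the corollary preceding Theorem \ref{thm:acareght}. To bring arbitrary $n\geq 3$ and $k\geq 2$ down to this base case I would first recolor to two colors to obtain $\HT^{=n}_2[\ap]$, and then, whenever $3\mid n$, group an apart solution set into blocks of $n/3$ consecutive elements to obtain $\HT^{=3}_2[\ap]$, as noted in the divisibility remark following Proposition \ref{prop:regHTtoHTleqn}.

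The main obstacle is covering those $n\geq 4$ with $3\nmid n$, where the divisibility reduction fails. The cleanest fix is to revisit the range-coding argument in the proof of Theorem 3.3 of \cite{Car-Kol-Lep-Zda:20}: its use of three apart terms is inessential, and padding each sum with a fixed prefix of $n-3$ apart elements chosen once from the solution set gives a direct proof of $\HT^{=n}_2[\ap]\Rightarrow\ACA_0$ for every $n\geq 3$. Once this routine adaptation is in place the cycle $(1)\Rightarrow(5)\Rightarrow(4)\Rightarrow(1)$ closes, and combined with Theorem \ref{thm:mileti} all five items are mutually equivalent over $\RCA_0$.
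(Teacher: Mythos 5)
Your decomposition is correct and covers every item, and the treatment of $(1)\Leftrightarrow(5)$ via Theorem \ref{thm:acareght} and Theorem \ref{thm:regtoaca} is exactly what the paper does. Where you diverge is item $(4)$: the paper simply cites Proposition~3.4 of \cite{Car-Kol-Lep-Zda:20} for the full equivalence $(1)\Leftrightarrow(4)$ and does not need the cycle $(1)\Rightarrow(5)\Rightarrow(4)\Rightarrow(1)$, whereas you re-derive it from the base case $\HT^{=3}_2[\ap]\Rightarrow\ACA_0$ (Theorem~3.3 of that reference) plus the trivial color-reduction, the divisibility reduction for $3\mid n$, and a ``padding'' adaptation for $3\nmid n$. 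Your use of Proposition \ref{prop:regHTtoHTleqn} for $(5)\Rightarrow(4)$ is a legitimate shortcut that the paper records but does not invoke in this proof. The one step you assert without details is the padding adaptation for $3\nmid n$; this is indeed defensible --- the paper itself remarks elsewhere that $\HT^{=n}_k[\ap]\geq_{\sW}\RAN$ for $n\geq 3$, $k\geq 2$ follows ``by an easy adaptation of the proof of Theorem~3.3 in \cite{Car-Kol-Lep-Zda:20}'' --- but a proposal relying on it should at least indicate that the coloring in that proof localizes the relevant information in a bounded number of terms so that the extra $n-3$ summands can be absorbed without affecting the decoding. Also note (a typo inherited from the statement) that for items $(3)$ and $(4)$ one must read $k\geq 2$; $k=1$ makes both principles trivially provable in $\RCA_0$, as your reduction to the two-color case implicitly assumes.
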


\begin{proof}
The equivalences between point (1), (2) and (3) are as in Theorem~\ref{thm:mileti}. The equivalence of 
(1) and (4) is from Proposition 3.4 in~\cite{Car-Kol-Lep-Zda:20}. Then the equivalence of (5) with points from (1) to (4) follows from Theorem~\ref{thm:acareght}, Theorem~\ref{thm:regtoaca} and Proposition~\ref{prop:regHTtoHTeqn}. 
\end{proof}

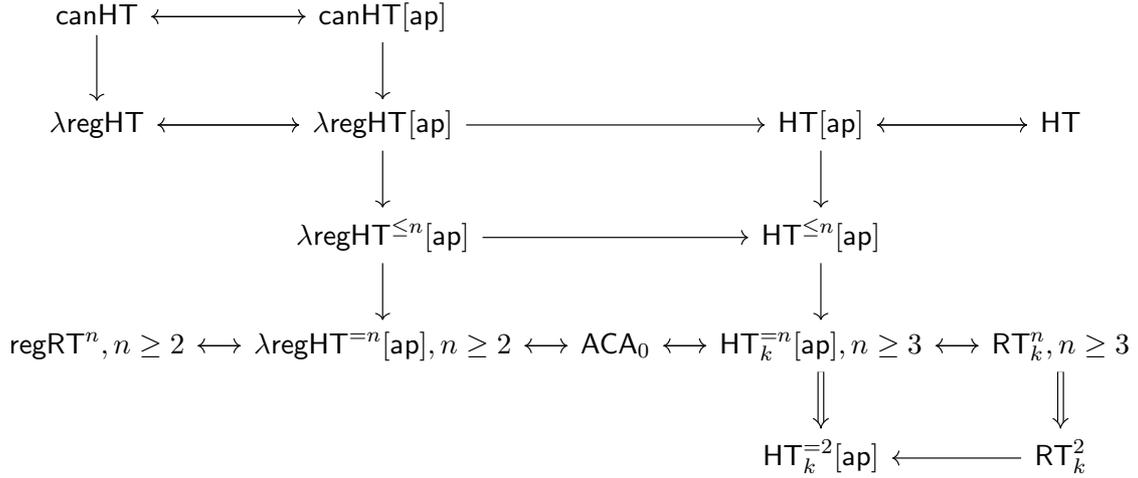
\begin{figure}
\centering
\[\begin{tikzcd}[row sep=2em,column sep=1.5em]
\arrow[d] \canHT \arrow[r] & \arrow[l] \canHT[\ap] \arrow[d] &   &
 &\\
 \lambda\regHT \arrow[r] & \lambda\regHT[\ap] \arrow[l]\arrow[d] \arrow[rr] & &  \HT[\ap] 
\arrow[d]\arrow[r] & \HT \arrow[l]\\
 &  \lambda\regHT^{\leq n}[\ap] \arrow[d] \arrow[rr] & & \HT^{\leq n}[\ap] \arrow[d] &  \\
\REG^n, n \geq 2 \arrow[r]& \arrow[l] \lambda\regHT^{= n}[\ap], n\geq 2 \arrow[r] & \ACA_0 \arrow[l] \arrow[r] & \HT^{=n}_k[\ap], n\geq 3 \arrow[l] \arrow[r] \arrow[d, Rightarrow] & \arrow[l] \RT^n_k, n\geq 3 \arrow[d, Rightarrow]\\
 & & & \HT^{=2}_k[\ap] & \arrow[l] \RT^2_k \end{tikzcd}\]
\caption{
Implications over $\RCA_0$. Double arrows indicate strict implications.
The equivalence of $\canHT[\ap]$ and $\canHT$ is from Theorem~\ref{thm:canap}. The implication from $\canHT$ to 
$\lambda\regHT$ is from Proposition~\ref{prop:canHTtoregHTap} and similarly for the versions with apartness. The equivalence between $\lambda\regHT$ and 
$\lambda\regHT[\ap]$ is from Proposition \ref{prop:regap}. The implication from $\lambda\regHT$ to $\HT$ is from Proposition \ref{prop:reghtht}. The implication from 
$\lambda\regHT^{\leq n}[\ap]$ to $\HT^{\leq n}[\ap]$ is from Proposition~\ref{prop:regHTtoHTeqn}. The equivalence of $\lambda\regHT^{=n}[\ap]$ with $\ACA_0$ (for $n\geq 2$) is from Theorem~\ref{thm:acareght} and Theorem \ref{thm:regtoaca}. The equivalence of $\HT^{=n}_k[\ap]$ with $\ACA_0$ (for $n\geq 3, k\geq 2$) is from \cite{Car-Kol-Lep-Zda:20}. The equivalence of $\RT^n_k$ with $\ACA_0$ (for $n\geq 3, k\geq 2$) is a classical result of Simpson, see Theorem III.7.6 in~\cite{Sim:SOSOA}.}\label{fig:implicationsHT}
\end{figure}

In terms of computable reductions we have the following, for $n\geq 2$ and $k\in\Nat^+$: 
$$ \RT^{2n-1}_2 \geq_\sW \REG^n \geq_\compred \RT^n_k,$$
where the first inequality is due to Hirst \cite{Hir:phd} and the second inequality is from Proposition~\ref{prop:regrt}.
Furthermore we have that
$$\REG^n \geq_\W \lambda\regHT^{=n}[\ap] \geq_\compred \HT^{=n}_k[\ap],$$
from Theorem \ref{thm:acareght} and Proposition~\ref{prop:regHTtoHTeqn}.

Moreover, whereas $\lambda\regHT^{=n}[\ap] \geq_\W \RAN$ for any $n\geq 2$ (Theorem~\ref{thm:regtoaca}), we have
that $\HT^{=n}_k[\ap] \geq_\W \RAN$ only for $n\geq 3$ and $k\geq 2$ (by an easy adaptation of the proof of Theorem 3.3 in~\cite{Car-Kol-Lep-Zda:20}).
Also note that $\RT^n_k \geq_\sW \HT^{=n}_k[\ap]$ by a straightforward reduction (see~\cite{Car-Kol-Lep-Zda:20}).

Some non-reducibility results can be gleaned from the above and known non-reducibility results from the literature. First, Dorais, Dzhafarov, Hirst, Mileti, and Shafer showed that $\RT^n_k \not\leq_{\sW} \RT^n_j$ (Theorem 3.1 of~\cite{DJSW:16}). Then $\RT^n_k \not\leq_{\W} \RT^n_j$ was proved by Brattka and Rakotoniaina \cite{Bra-Rat:17} and, independently, by Hirschfeldt and Jockusch~\cite{Hir-Joc:16}. Patey further improved this result by showing that the computable reduction does not hold either~\cite{Pat:16}; i.e., $\RT^n_k \not\leq_{\compred} \RT^n_j$ for all $n\geq 2$, $k > j \geq 2$. We can derive, among others, the following corollaries. 

\begin{corollary}\label{prop:reg-forallk-rt}
For each $n, k \geq 2$, $\REG^n \not\leq_{\compred} \RT^n_k$.
\end{corollary}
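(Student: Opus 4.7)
The plan is to derive a contradiction from transitivity of Weihrauch reducibility, using the two ingredients already available: Proposition \ref{prop:regrt}, which gives $\RT^n_m \leq_\W \REG^n$ for every $n\geq 2$ and every $m \geq 1$, and the Brattka--Rakotoniaina/Hirschfeldt--Jockusch separation $\RT^n_{k+1} \not\leq_\W \RT^n_k$ for $k \geq 2$, recalled immediately before the corollary. The key observation is that Proposition~\ref{prop:regrt} says $\REG^n$ is, uniformly, an upper bound for $\RT^n_m$ for \emph{arbitrary} numbers of colours $m$, so any uniform upper bound on $\REG^n$ in terms of $\RT^n_k$ for a fixed $k$ would collapse the whole colour hierarchy of $\RT^n$ at level $k$.

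Concretely, first I would fix $n, k \geq 2$ and assume, toward a contradiction, that $\REG^n \leq_\W \RT^n_k$. Next I would instantiate Proposition~\ref{prop:regrt} at $m = k+1$ to obtain $\RT^n_{k+1} \leq_\W \REG^n$. Composing the two reductions via transitivity of $\leq_\W$ yields $\RT^n_{k+1} \leq_\W \RT^n_k$, which directly contradicts the cited separation. Therefore $\REG^n \not\leq_\W \RT^n_k$, as claimed.

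There is no substantive obstacle in this argument: the work is entirely front-loaded in Proposition~\ref{prop:regrt} (the uniform reduction of arbitrarily-many-coloured Ramsey to the regressive principle) and in the deep separation theorem that is used as a black box. Once those are in hand, the corollary is purely a transitivity argument, and it works uniformly in $n$ and $k$.
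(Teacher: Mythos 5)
Your proof is correct and is essentially identical to the paper's own argument: both instantiate Proposition~\ref{prop:regrt} at $k+1$ colours and compose with the hypothesized reduction via transitivity of $\leq_\W$ to contradict $\RT^n_{k+1}\not\leq_\W\RT^n_k$.
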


\begin{proof}
From Proposition~\ref{prop:regrt} we know that $\RT^n_{k+1} \leq_{\compred} \REG^n$, so if we had $\REG^n \leq_{\compred} \RT^n_k$ we could transitively obtain $\RT^n_{k+1} \leq_{\compred} \RT^n_k$, hence contradicting the fact that $\RT^n_{k+1} \not\leq_{\compred} \RT^n_k$ proved by Patey~\cite{Pat:16}.
\end{proof}

\begin{corollary}
$\RT^3_3\not\leq_{\compred}\lambda\regHT^{= 2}[\ap]$.
\end{corollary}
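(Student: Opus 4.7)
The plan is to derive both non-reducibility results from a single strong Weihrauch upper bound on $\lambda\regHT^{=2}[\ap]$. By Theorem~\ref{thm:acareght} we have $\lambda\regHT^{=2}[\ap] \leq_\sW \REG^2$, and by Hirst's Theorem~6.14 in~\cite{Hir:phd} (recalled above in the displayed inequality $\RT^{2n-1}_2 \geq_\sW \REG^n$) we have $\REG^2 \leq_\sW \RT^3_2$. Since $\leq_\sW$ is transitive, composing the two pairs of Turing functionals that witness these reductions yields
\[
\lambda\regHT^{=2}[\ap] \leq_\sW \RT^3_2.
\]

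Given this upper bound, both non-reducibility claims reduce to known separation results for Ramsey's Theorem. Suppose first, toward contradiction, that $\RT^3_4 \leq_\sW \lambda\regHT^{=2}[\ap]$. Then transitively $\RT^3_4 \leq_\sW \RT^3_2$, contradicting Theorem~3.1 of~\cite{DJSW:16} (the same result invoked in the proof of Corollary~\ref{prop:reg-forallk-rt}), which asserts that $\RT^n_k \not\leq_\sW \RT^n_j$ whenever $k > j \geq 2$. This handles the case of $\RT^3_4$. Entirely analogously, if $\RT^4_2 \leq_\sW \lambda\regHT^{=2}[\ap]$, composition with the above chain gives $\RT^4_2 \leq_\sW \RT^3_2$, contradicting the strong Weihrauch separation of $\RT^n_2$ at distinct exponents.

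The main obstacle is simply \emph{bibliographical}: only the color separation is made explicit in the paragraph preceding the corollary, so the argument for $\RT^4_2$ requires quoting the exponent separation $\RT^{n+1}_2 \not\leq_\sW \RT^n_2$ (equivalently, $\RT^4_2 \not\leq_\sW \RT^3_2$). This is a standard consequence of the computability-theoretic analysis of $\RT^n_2$ — roughly, $\RT^n_2$ uniformly produces information at the level of $\emptyset^{(n-2)}$ that $\RT^{n-1}_2$ cannot uniformly supply from a single solution — and ought to be cited from the literature on the Weihrauch degrees of Ramsey-type principles. No genuine combinatorial work is required beyond what is already collected in the paper; the proof is purely a transitivity argument.
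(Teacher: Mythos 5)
Your argument is correct and is essentially the same one the paper gives: compose $\lambda\regHT^{=2}[\ap] \leq_\sW \REG^2$ (Theorem~\ref{thm:acareght}) with Hirst's $\REG^2 \leq_\sW \RT^3_2$, then invoke the two separations $\RT^3_4 \not\leq_\sW \RT^3_2$ and $\RT^4_2 \not\leq_\sW \RT^3_2$. The only difference is presentational: the paper's proof cites both separations in a single sentence, whereas you correctly point out that only the colour separation $\RT^n_k \not\leq_\sW \RT^n_j$ ($k>j\geq 2$) is explicitly quoted in the discussion preceding the corollary, so the exponent separation $\RT^4_2 \not\leq_\sW \RT^3_2$ deserves its own reference — a fair bibliographical remark that does not change the mathematical content.
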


\begin{proof}
It is known from \cite{Pat:16} that $\RT^3_3 \not\leq_{\compred} \RT^3_2$. 
On the other hand $\lambda\regHT^{= 2}[\ap] \leq_{\mathrm{W}} \RT^3_2$, since
$\lambda\regHT^{= 2}[\ap] \leq_{\mathrm{W}} \REG^2$ (Theorem \ref{thm:acareght}) and 
$\REG^2 \leq_\sW  \RT^3_2$ (from the proof of Theorem 6.14 in \cite{Hir:phd}) and since the involved
reducibilities satisfy the following inclusions and are transitive: $\leq_\sW \subseteq \leq_\W \subseteq \leq_\compred$.
\end{proof}

As proved in~\cite{Car-Kol-Lep-Zda:20}, restrictions of Hindman's Theorem have intriguing connections
with the so-called 
Increasing Polarized Ramsey's Theorem for pairs $\IPT^2_2$ of Dzhafarov and Hirst~\cite{Dza-Hir:11}. For example, 
$\HT^{=2}_2[\ap] \geq_\W \IPT^2_2$ (Theorem 4.2 in \cite{Car-Kol-Lep-Zda:20}). By this result and Proposition~\ref{prop:regHTtoHTeqn} we have the following corollary.

\begin{corollary}\label{coro:regHTtoIPT}
$\IPT^2_2\leq_\mathrm{\compred} \lambda\regHT^{=2}[\ap]$.
\end{corollary}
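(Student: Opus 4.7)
The plan is to prove the reduction by a straightforward composition of two already-available Weirauch reductions, using transitivity of $\leq_\W$. Specifically, I would invoke Proposition \ref{prop:regHTtoHTleqn} instantiated at $n=2$ and $k=2$, which gives the reduction $\HT^{=2}_2[\ap] \leq_\W \lambda\regHT^{=2}[\ap]$, and then the reduction $\IPT^2_2 \leq_\W \HT^{=2}_2[\ap]$ established in Theorem 4.2 of \cite{Car-Kol-Lep-Zda:20}, which is explicitly cited just before the corollary as the strongest known lower bound on $\HT^{=2}_2[\ap]$ in terms of computable reductions.

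Concretely, given an instance $X$ of $\IPT^2_2$, I would first apply the forward functional $\Phi_1$ witnessing $\IPT^2_2 \leq_\W \HT^{=2}_2[\ap]$ to produce an instance $\Phi_1(X)$ of $\HT^{=2}_2[\ap]$, and then apply the forward functional $\Phi_2$ from Proposition \ref{prop:regHTtoHTleqn} to $\Phi_1(X)$, obtaining a $\lambda$-regressive coloring $\Phi_2(\Phi_1(X))$ which serves as the final instance of $\lambda\regHT^{=2}[\ap]$. Given any apart min-term-homogeneous solution $\hat{Y}$ for this instance, one composes the backward functionals in reverse: first $\Psi_2$ from Proposition \ref{prop:regHTtoHTleqn} produces a solution to $\HT^{=2}_2[\ap]$ for $\Phi_1(X)$ from $\Phi_1(X) \oplus \hat{Y}$, and then $\Psi_1$ from Theorem 4.2 of \cite{Car-Kol-Lep-Zda:20} converts this into a solution for the original instance $X$ of $\IPT^2_2$. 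Since each reduction is Weirauch (not merely computable), the composition is itself a Weirauch reduction; thus transitivity applies without any subtlety.

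There is essentially no obstacle here, as the work has been done in the cited Proposition \ref{prop:regHTtoHTleqn} and in \cite{Car-Kol-Lep-Zda:20}; the only point worth noting is that $\leq_\W$ is transitive (which is standard, see \cite{DDHMS:16}), and that the backward functional from Proposition \ref{prop:regHTtoHTleqn} has access to the original $\HT^{=2}_2[\ap]$-instance $\Phi_1(X)$, which is itself $X$-computable, so the overall backward functional only needs access to $X \oplus \hat{Y}$ as required by the definition of $\leq_\W$.
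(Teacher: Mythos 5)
Your proposal is correct and is essentially the paper's own argument: the corollary is obtained by composing $\IPT^2_2 \leq_\W \HT^{=2}_2[\ap]$ (Theorem 4.2 of \cite{Car-Kol-Lep-Zda:20}) with $\HT^{=2}_2[\ap] \leq_\W \lambda\regHT^{=2}[\ap]$ (Proposition~\ref{prop:regHTtoHTleqn} at $n=k=2$) and invoking transitivity of $\leq_\W$. Note that your citation of Proposition~\ref{prop:regHTtoHTleqn} (the $\FS^{=n}$ version) is in fact the precise one needed, whereas the paper's prose points at Proposition~\ref{prop:regHTtoHTeqn} (the $\FS^{\leq n}$ version) --- presumably a label slip --- so if anything your version is a touch tidier.
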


Note that $\IPT^2_2$ is the strongest known lower bound for $\HT^{=2}_2[\ap]$ in terms of reductions. Some interesting lower bounds on $\HT^{=2}$ \emph{without} apartness are in~\cite{Csi-Dzh-Hir-Joc-Sol-Wes:19}. We haven't investigated $\lambda\regHT^{=n}$ \emph{without} the apartness condition; we conjecture that the lower bounds on $\HT^{=2}$ (\emph{without} apartness) from \cite{Csi-Dzh-Hir-Joc-Sol-Wes:19} can be adapted to $\lambda\regHT^{=2}$.

\subsection{Bounded regressive Hindman's Theorem and Well-ordering Principles}

Let $(\ordX, <_\ordX)$ be a linear ordering. We denote by $\omega^\ordX$ 
the collection of finite sequences of the form $(x_1, x_2, \dots, x_s)$
such that, for all $i \in [1, s]$, $x_i \in \ordX$ and, for all $i,j \in [1,s]$ 
such that $i < j$, $x_i \geq_\ordX x_j$. We call the $x_i$s the \emph{components} of $\sigma$.
We denote by $|\sigma|$ the \emph{length of} $\sigma$, i.e. $|\sigma| = s$. 
We order $\omega^\ordX$ lexicographically. Then, if $\sigma, \tau \in \omega^\ordX$ 
and $\sigma$ strictly extends $\tau$, we have $\sigma > \tau$. If $j$ is least such that
$x_j  = \sigma(j) \neq \tau(j) = x'_j$ and $x_j >_\ordX x_j'$ then $\sigma > \tau$. Otherwise $\tau \geq \sigma$.

If $(\ordX, <_\ordX)$ is a well-ordering, then the just defined ordering on $\omega^\ordX$ is also a well-ordering (provably in sufficiently strong theories). In this case we can then identify an element $\sigma = (x_1, x_2, \dots, x_s)$ of $\omega^\ordX$
with the ordinal $\omega^{x_1}+ \omega^{x_2} + \dots + \omega^{x_s}$. 
The lexicographic ordering of $\omega^\ordX$ coincides with the usual ordering of ordinals in Cantor Normal Form. 

The well-ordering preservation principle (or well-ordering principle) for base-$\omega$ exponentiation is the following $\Pi^1_2$-principle: 
$$ \forall \ordX (\WO(\ordX) \to \WO(\omega^\ordX)),$$
where $\WO(Y)$ is the standard $\Pi^1_1$-formula stating that $Y$ is a well-ordering. We abbreviate the above
well-ordering preservation principle by $\WOP(\ordX \mapsto \omega^\ordX)$.

It is known that $\WOP(\ordX \mapsto \omega^\ordX)$ is equivalent to $\ACA_0$ by results of Girard and Hirst
(see \cite{Hir:94}). 
A direct combinatorial proof from $\RT^3_3$ to $\WOP(\ordX \mapsto \omega^\ordX)$ in 
$\RCA_0$ was given by Carlucci and Zdanowski~\cite{Car-Zda:12} (the proof yields a Weihrauch reduction as clear by inspection). On the other hand, we proved in 
Theorem~\ref{thm:regtoaca} that, for any $n \geq 2$, $\lambda\regHT^{= n}[\ap]$ implies $\ACA_0$ over $\RCA_0$. Therefore in $\RCA_0$
we have that, for $n \geq 2$, $\lambda\regHT^{= n}$ with apartness implies $\WOP(\ordX\mapsto \omega^\ordX)$. 
However, we can not use the same arguments to derive an analogous chain of reductions. In the next theorem we show 
that $\WOP(\ordX\mapsto \omega^\ordX)$ is Weihrauch-reducible to $\lambda\regHT^{= n}[\ap]$, 
while also giving a direct proof of the implication in $\RCA_0$. This result relates for the first time, to the best of our knowledge, Hindman-type theorems and transfinite well-orderings.

To make the principle $\WOP(\ordX\mapsto \omega^\ordX)$ amenable to questions of reducibility it is natural to consider its 
contrapositive form: an {\em instance} is an infinite descending sequence in $\omega^\ordX$ and a {\em solution} is an infinite descending sequence in $\ordX$ (in fact, one might require that the solution consists of terms already occurring as subterms of the elements of the instance sequence, as is the case in our argument below).

We briefly describe the idea in the proof of Theorem \ref{thm:reghtwop} below. Let $\ordX$ be a linear ordering. Let $\alpha=(\alpha_i)_{i \in \Nat}$ be an infinite decreasing sequence in $\omega^\ordX$. We show, using $\lambda\regHT^{= 2}[\ap]$, that there exists an infinite decreasing sequence in $\ordX$. The proof uses ideas from our proof of the fact that $\lambda\regHT^{= 2}$ with apartness implies $\ACA_0$ (Theorem \ref{thm:regtoaca}) adapted to the present context, based on the following analogy between deciding the first Turing jump $\emptyset'$ and computing an infinite descending sequence in $\ordX$. Given an enumeration of $\emptyset'$ and a number $n$, $\RCA_0$ knows that there is a $b$ such that all numbers in $\emptyset'$ below $n$ appear within $b$ steps of 
the enumeration, but is not able to compute this $b$. Similarly, given an ordinal $\alpha$ in an infinite decreasing sequence in $\omega^\ordX$, $\RCA_0$ knows that there is a $b$ such that if a term of $\alpha$ ever decreases, it will do so 
by the $b$-th term of the infinite descending sequence, but is unable to compute such a $b$. More precisely, 
while one can computably run through the given infinite descending sequence to find the first point at which an exponent 
of a component of $\alpha$ is decreased, we can not locate computably the leftmost such component. An appropriately 
designed colouring will ensure that the information about such a $b$ can be read off from the elements of any apart solution to $\lambda\regHT^{= n}$. 

We start with the following simple Lemma. For technical convenience in the rest of this section we index infinite sequences and sets starting from $1$.

\begin{lemma}\label{claim1} The following is provable in $\RCA_0$: If $\alpha=(\alpha_i)_{i\in\Nat^+}$ is an infinite descending sequence in $\omega^\ordX$, then
$$\forall n\ \exists n'\ \exists m\leq |\alpha_n| \ \big(n'>n \ \land m \leq |\alpha_{n'}| \ \land \alpha_{n,m} >_{\ordX} \alpha_{n',m}\big),$$
where $\alpha_{i,j}$ denotes the $j$-th component of $\alpha_i$ for $j \in [1,|\alpha_i|]$ and is otherwise undefined.
\end{lemma}

\begin{proof}

Assume by way of contradiction that the statement is false, as witnessed by $n$, and recall that for any distinct $\sigma, \tau \in \omega^\ordX$, we have $\sigma < \tau$ if and only if either (1.) $\sigma$ is a proper initial segment of $\tau$, or (2.) there exists $m$ such that $\sigma(m) <_\ordX \tau(m)$ and  $\sigma(m') = \tau(m')$ for each $m'<m$. Then we can show that:
$$
\forall p\ (p \geq n \rightarrow (\alpha_{p+1} \text{ is a proper initial segment of both } \alpha_p \text{ and } \alpha_n))
$$
by $\Delta^0_1$-induction.

The case $p=n$ is trivial, since $\alpha_n >_{\ordX} \alpha_{n+1}$ and (2.) cannot hold by assumption.

For $p>n$, by induction hypothesis we know that $\alpha_p$ is a proper initial segment of $\alpha_n$. 
Since $\alpha_{p+1} <_{\ordX} \alpha_p$, $\ \alpha_{p+1}$ must be a proper initial segment of $\alpha_p$, otherwise the leftmost component differing between $\alpha_{p+1}$ and $\alpha_p$ -- i.e. the component of $\alpha_{p+1}$ with index $m$ witnessing (2.) -- would contradict our assumption, for we would have $m\leq |\alpha_p|$ and $\alpha_{p+1,m} <_\ordX \alpha_{p,m} = \alpha_{n,m}$.

So  $\alpha_{p+1}$ must be a proper initial segment of $\alpha_p$ and, by our assumption, it must be a proper initial segment of $\alpha_n$ as well.

The previous statement implies that:
$$
\forall p\ (p \geq n \rightarrow |\alpha_p| > |\alpha_{p+1}|),
$$
which contradicts $\WO(\omega)$. This concludes the proof.
\end{proof}

\begin{theorem}\label{thm:reghtwop}
Let $n \geq 2$. $\lambda\regHT^{= n}[\ap]$ implies $\WOP(\ordX\mapsto \omega^\ordX)$ over $\RCA_0$.
Moreover, $\lambda\regHT^{=n}[\ap]\geq_\W \WOP(\ordX\mapsto \omega^\ordX)$.
\end{theorem}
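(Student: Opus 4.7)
The plan is to adapt the proof of Theorem~\ref{thm:regtoaca}, trading the role of the range of an injective function for that of the positions at which the given descending sequence strictly decreases its exponents. Given an infinite descending sequence $(\alpha_i)_{i\in\Nat}$ in $\omega^\ordX$, the forward Turing functional produces a $\lambda$-regressive coloring $c:\Nat\to\Nat$ encoding latest-event information about such decreases; any apart min-term-homogeneous solution $H$ to $\lambda\regHT^{=n}[\ap]$ for $c$ then allows the backward functional to read off an infinite descending sequence in $\ordX$.

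Explicitly, for $m\in\Nat^+$ with $\ell=\lambda(m)$ and $u=\mu(m)$, if $\ell=0$ or $\ell=u$ set $c(m)=0$; otherwise scan $t=u-1,u-2,\dots,\ell$ in decreasing order and pick the largest $t$ at which some position $j<\ell$ satisfies $e((\alpha_{t+1})_j)<_{\ordX} e((\alpha_t)_j)$, setting $c(m)$ equal to the least such $j$, with $c(m)=0$ by default if no such $t$ exists. The resulting $c$ is $\lambda$-regressive and uniformly computable in $\ordX\oplus(\alpha_i)$.

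Now apply $\lambda\regHT^{=n}[\ap]$ to $c$ (for concreteness take $n=2$; the general case is analogous), obtaining an apart $H=\{h_0<h_1<\dots\}\subseteq\Nat^+$. Min-term-homogeneity on the pairs $\{h_0,h_i\}$, combined with the apartness identities $\lambda(h_0+h_i)=\lambda(h_0)$ and $\mu(h_0+h_i)=\mu(h_i)$, yields a constant $j^*=c(h_0+h_i)$ for every $i\geq 1$. Mirroring Theorem~\ref{thm:regtoaca}, were a strict decrease at a position $j'<j^*$ ever to occur at some step $t\geq\mu(h_1)$, one could take $h_i\in H$ with $\mu(h_i)>t$ and derive $c(h_0+h_i)\leq j'<j^*$, contradicting constancy. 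Hence no strict decrease happens at a position $<j^*$ after time $\lambda(h_0)$, and the latest-event formulation of $c$ additionally forces infinitely many fresh strict decreases at position $j^*$ across the windows $[\mu(h_i),\mu(h_{i+1}))$; the backward functional outputs the corresponding strictly decreasing sequence of exponents at position $j^*$ as the sought descending sequence in $\ordX$.

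The principal obstacle is controlling the degenerate scenario in which the exponent at position $j^*$ could stabilize after only finitely many drops, which would leave $c$ constant merely because the latest decrease never refreshes. The latest-event design of $c$ is tailored precisely to preclude this: if no new decrease at position $<\lambda(h_0)$ appears in a given window, then $c$'s value at that window must fall back to the default, violating constancy. The bookkeeping becomes delicate when one must simultaneously handle the subcase in which all strict decreases below $\lambda(h_0)$ are confined to a bounded initial segment of $(\alpha_t)_t$; the analysis is then repeated at coarser scales via the min-term-homogeneity constants $j^*_i=c(h_i+h_{i+1})$ supplied by the pairs $\{h_i,h_j\}$ with $i\geq 0$, piecing together the resulting stabilized exponents across positions $j^*_0<j^*_1<\dots$ to form the descending sequence in $\ordX$.
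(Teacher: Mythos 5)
Your forward functional (the coloring) is in the same spirit as the paper's, but your backward extraction has a genuine gap. You attempt to read off the entire descending sequence in $\ordX$ as the successive exponents at a \emph{single fixed} position $j^*$, and then must confront the possibility that the exponent at $j^*$ stabilizes after finitely many drops. You claim the latest-event design of $c$ ``precludes'' this because ``if no new decrease at position $<\lambda(h_0)$ appears in a given window, then $c$'s value at that window must fall back to the default, violating constancy.'' This is false: $c(h_0+h_{i+1})$ scans the whole window $[\lambda(h_0),\mu(h_{i+1}))$, not just $[\mu(h_i),\mu(h_{i+1}))$, so it will continue to pick up the \emph{same} latest decrease from the earlier part of the window. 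Constancy of $c$ is therefore perfectly compatible with all decreases below $\lambda(h_0)$ being confined to a bounded initial segment of the $\alpha$-sequence, and no infinite descent at a fixed position need exist at all. Your final paragraph gestures at repairing this via the constants $j^*_i=c(h_i+h_{i+1})$, but there is no argument that the $j^*_i$ are increasing, nor any mechanism given for ``piecing together stabilized exponents'' at different positions into a $>_\ordX$-descending chain.

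The paper avoids this trap by making the backward functional genuinely \emph{iterative}: it flattens $\alpha$ into a sequence $\beta$ of exponents with bookkeeping $m(\cdot)$, $pos(\cdot)$, proves a Claim (via min-term-homogeneity and apartness, analogous to your constancy argument) that whenever a $\beta$-index $j<\lambda(h_i)$ is decreasible at all it is decreased by some $k<\mu(h_{i+n-1})$, and uses this \emph{only} as a computable bound to locate, stage by stage, the leftmost position of $\alpha_1$ that ever decreases, then moves to the decreasing element $\alpha_d$, restricts attention to positions $\geq p$, and repeats. The resulting exponents $\sigma_1 >_\ordX \sigma_2 >_\ordX \cdots$ descend because positions only move rightward and each step records a strict drop, with no assumption that any one position drops infinitely often. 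Your proposal would need to be reorganized around this iterative scheme; the Claim you essentially prove is the right tool, but you are using it to conclude something stronger (a single position decreasing forever) than what it actually yields (a computable bound on where the next decrease occurs).
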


\begin{proof}
Let $\alpha= (\alpha_n)_{n\in \Nat^+}$ be an infinite descending sequence in $\omega^\ordX$. 
We say that $\alpha_{n,m}$ is \emph{decreasible} if there exists a $n'>n$ such that $\alpha_{n',m} <_\ordX \alpha_{n,m}$. In this case we say that $\alpha_{n',m}$ \emph{decreases} $\alpha_{n,m}$. 
With this terminology Lemma \ref{claim1} says that $\RCA_0$ knows that for all $i\geq 1$ there exists $j \in [1, |\alpha_i|]$ such that $\alpha_{i,j}$ is decreasible. If $\alpha_{n',m}$ decreases $\alpha_{n,m}$ and no $\alpha_{k,m}$ 
with $k < n'$ decreases $\alpha_{n,m}$ we call $\alpha_{n',m}$ the \emph{least decreaser} of $\alpha_{n,m}$. 

Now suppose that $f:\Nat \to \Nat$ is a function with the following property:

\bigskip
{\em Property P}: For all $i\in \Nat^+$ for all $j \in [1, |\alpha_i|]$
if $\alpha_{i,j}$ is decreasible then $\alpha_{i,j}$ is decreased by $\alpha_{k,j}$ for some $k\leq f(i)$. 

\bigskip
We first show that given such an $f$ we can compute (in $f$ and $\alpha$) 
an infinite descending sequence $(\sigma_i)_{i\in\Nat^+}$ in $\mathcal{X}$ as follows. 

\bigskip
{\bf Step $1$}.  
Pick the leftmost decreasible component of $\alpha_1$ (which exists by Lemma \ref{claim1}). 
This can be done by inspecting all components 
in $\alpha$ up through $\alpha_{f(1)}$, since $f$ has Property $P$. 

Let $p_1$ be the position of the leftmost decreasible component of $\alpha_1$. 
Pick the smallest $d_1 \leq f(1)$ such that $\alpha_{d_1, p_1}$ decreases 
$\alpha_{1,p_1}$. 
We set $\sigma_1 = \alpha_{d_1, p_1}$ and observe that
all decreasible components in $\alpha_{d_1}$ occur at positions $\geq p_1$. 
Suppose otherwise and let $1\leq p^* < p_1$ be such that $\alpha_{d_1, p^*}$ is decreasible. 
Let $d^* > d_1$ such that $\alpha_{d^*, p^*}$ decreases $\alpha_{d_1,p^*}$. Then 
$\alpha_{d^*, p^*} <_{\ordX} \alpha_{d_1,p^*}$ by definition of decreasible. 
On the other hand, by choice of $d_1$ and $p_1$, and since
$p^* < p_1$, it must be the case that $\alpha_{d_1, p^*} = \alpha_{1, p^*}$. Hence $\alpha_{1, p^*}$ is a decreasible
component in $\alpha_1$ on the left of position $p_1$, which contradicts the choice of $p_1$. 

\bigskip
{\bf Step $i+1$ ($i > 0$)}. Suppose $d_i, p_i, \sigma_i$ are defined so that $\sigma_i = \alpha_{d_i, p_i}$, $(\sigma_j)_{1\leq j\leq i}$ is decreasing in $\mathcal{X}$ and all decreasible components in $\alpha_{d_i}$ occur at positions $\geq p_i$. 

Pick the leftmost decreasible component in $\alpha_{d_i}$ (which exists by Lemma \ref{claim1}). This can be done by inspecting all components in $\alpha$ up to 
$\alpha_{f(d_i)}$, since $f$ has Property $P$.  Let $\alpha_{d_i, \ell}$ be the chosen component. Set $p_{i+1}=\ell$ and note that necessarily $p_{i+1}\geq p_i$.


Pick $d \leq f(d_i)$ minimal such that $\alpha_{d, p_{i+1}}$
decreases $\alpha_{d_i, p_{i+1}}$. Set $d_{i+1}=d$. 
Let $\sigma_{i+1} = \alpha_{d_{i+1}, p_{i+1}}$. 
Obviously $\sigma_i >_\mathcal{X} \sigma_{i+1}$, since $\sigma_i  = \alpha_{d_i,p_i} \geq \alpha_{d_i,p_{i+1}} >_\mathcal{X} \alpha_{d_{i+1}, p_{i+1}} = \sigma_{i+1}$ (note that $p_i \leq p_{i+1}$).

We observe that also the last part of the inductive invariant is guaranteed, since no decreasible component 
in $\alpha_{d_{i+1}}$ occurs on the left of $p_{i+1}$. 
Suppose otherwise as witnessed by $1\leq p^* < p_{i+1}$. Let $d^* > d_{i+1}$ such that $\alpha_{d^*,p^*}$
decreases $\alpha_{d_{i+1}, p^*}$. Then $\alpha_{d^*, p^*}$ also decreases $\alpha_{d_i, p^*}$ 
since $\alpha_{d_{i+1}, p^*}= \alpha_{d_{i}, p^*}$, where the latter is due to the fact that $\alpha$ is decreasing 
and $p^*$ is less than $p_{i+1}$, which is the position of the leftmost decreasible component in $\alpha_{d_i}$. This contradicts the choice of $p_{i+1}$. 

\bigskip
We next show how to obtain a function satisfying Property $P$ from a solution of $\lambda\regHT^{=n}[\ap]$ for a suitable colouring. The argument is similar to the proof of Theorem \ref{thm:regtoaca}.

For this purpose it is convenient to use a sequence $\beta$ of all the components of the terms $\alpha_n$ in $\alpha$, enumerated in order of appearance: more precisely, $(\beta_h)_{h\in\Nat^+}$ is the ordered sequence $\alpha_{1,1}, \alpha_{1,2}, \dots, \alpha_{1,|\alpha_1|}, \alpha_{2,1}, \alpha_{2,2}, \dots, \alpha_{2,|\alpha_2|}, \dots$. 
This sequence is obviously easily computable from $\alpha$. 
Formally we construct such a sequence by first defining a function $\iota : \Nat^+\times\Nat^+\to \Nat^+$ as follows: $\iota(n,m)= \sum_{1\leq k < n} |\alpha_k| + m$, for all $n\in\Nat^+$ and all $m \in [1, |\alpha_n|]$, while 
$\iota(n,m)=0$ in all other cases. 
We correspondigly fix functions $t: \Nat^+ \to \Nat^+$ and $p:\Nat^+\to\Nat^+$ such that for each $n\in\Nat^+$ we have
$\iota(t(n),p(n))=n$. The sequence $(\beta_h)_{h\in\Nat^+}$ of all components appearing in $\alpha$ is then defined
by setting $\beta_h = \alpha_{t(h),p(h)}$.

Define $c:\Nat\to \Nat$ as follows: $c(x)=$ the unique $i < \lambda(x)$ satisfying the following conditions:
\begin{enumerate}
\item There exists $j$ such that $\lambda(x) \leq j < \mu(x)$ and $\beta_j$ is the least decreaser of $\beta_i$, and 
\item For all $j'$ such that $j < j'  < \mu(x)$, if $\beta_{j'}$ is the least decreaser of $\beta_{i'}$ then $i' \geq \lambda(x)$. 
\end{enumerate}
If no such $i$ exists, we set $c(x)=0$.

The function $c$ is computable in $\alpha$ and $\lambda$-regressive. Let $H = \{ h_1 < h_2 < h_3 < \dots \}$ be an apart solution to $\lambda\regHT^{= n}$ for $c$. The following Claim ensures the existence of an $(\alpha \oplus H)$-computable function with Property $P$.

\begin{claim}\label{claim2} For each $h_k\in H$ and each $\alpha_{\ell,m}$ such that $\iota(\ell,m) < \lambda(h_k)$, if there exists $\alpha_{\ell',m}$ such that $\alpha_{\ell',m}$ decreases $\alpha_{\ell,m}$ then there exists such an $\alpha_{\ell',m}$ with $\iota(\ell',m)<\mu(h_{k+n-1})$.
\end{claim}

\begin{proof}[Proof of Claim \ref{claim2}]
Assume by way of contradiction that there is some $h_k \in H$ and some $\alpha_{\ell,m}$ with $\iota(\ell,m)=i < \lambda(h_k)$ such that $\alpha_{\ell,m}$ is decreasible but not by any $\alpha_{\ell',m}$ with $\iota(\ell',m) < \mu(h_{k+n-1})$. 

Let $b$ be such that if $\alpha_{\ell'',m}$ is decreasible and $\iota(\ell'',m) < \lambda(h_k)$, then there exists $\ell',m$ such that $\iota(\ell',m) < b$ and $\alpha_{\ell',m}$ decreases $\alpha_{\ell'',m}$. The existence of $b$ can be proved in $\RCA_0$ using the following instance of strong $\Sigma^0_1$-bounding (similarly as in the proof of Theorem \ref{thm:regtoaca}):
$$ \forall n \exists b \forall i < n ( \exists j (\alpha_{t(j),p(j)} \mbox{ decreases } \alpha_{t(i),p(i)}) \to \exists j < b (\alpha_{t(j),p(j)} \mbox{ decreases } \alpha_{t(i),p(i)}).$$
Since $H$ is infinite, there is an $h_{k'} \in H$ such that $h_{k'} > h_{k+n-1}$ and $\mu(h_{k'}) \geq b$. Then, by min-term-homogeneity, $c(h_k + \cdots + h_{k+n-1}) = c(h_k + \cdots + h_{k+n-2} + h_{k'})$. But by choice of $h_k$, $h_{k'}$ and the definition of $c$, we can show that $c(h_k + \cdots + h_{k+n-1}) \neq c(h_k + \cdots + h_{k+n-2} + h_{k'})$, 
yielding a contradiction.

To see this we reason as follows. First observe that, by apartness of $H$, the following identities hold:
$$\lambda(h_k + \cdots + h_{k+n-1}) = \lambda(h_k + \cdots + h_{k+n-2} + h_{k'}) = \lambda(h_k),$$
and
$$\mu(h_k + \cdots + h_{k+n-2} + h_{k'})=\mu(h_{k'}).$$
Let $j \in [\mu(h_{k+n-1}), \mu(h_{k'}))$ be such that $\alpha_{t(j),p(j)}$ is the least decreaser of $\alpha_{\ell,m}$. Such a $j$ exists by choice of $\alpha_{\ell,m}$, $h_k$ and $h_{k'}$. In fact, by hypothesis, 
$\alpha_{\ell,m}$ is decreasible but not by any component with $\iota$-index below $\mu(h_{k+n-1})$. By choice
of $h_k'$ the least decreaser of $\alpha_{\ell,m}$ must have $\iota$-index smaller than $\mu(h_{k'})$, since $\iota(\ell,m) < \lambda(h_k)$.

First note that
$c(h_k + \cdots + h_{k+n-2} + h_{k'})$ cannot be $0$, 
since this occurs if and only if there is no $i^* < \lambda(h_k)$ such that for some $j^* \in [\lambda(h_k), \mu(h_{k'}))$, $\alpha_{t(j),p(j)}$ decreases $\alpha_{t(i^*), p(i^*)}$; but the latter is false by choice of $h_k$ and $h_{k'}$. 

If $c(h_k + \cdots + h_{k+n-1})$ takes some non-zero value $i^* < \lambda(h_k)$, then this same value cannot
be taken by $c(h_k + \cdots + h_{k+n-2} + h_{k'})$ under our assumptions. 
If it were, it would mean that $\alpha_{t(i^*),p(i^*)}$ is decreased for the 
first time by some $\alpha_{t(j^*), p(j^*)}$ with $j^* < \mu(h_{k'})$ such that $j^*$ is also maximal below $\mu(h_{k'})$ such that $\alpha_{t(j^*), p(j^*)}$ is the least decreaser of some $\alpha_{t(q),p(q)}$ with  $q < \lambda(h_k)$. This is impossible since the least decreaser of $\alpha_{t(i^*),p(i^*)}$, by the hypothesis that $c(h_k + \cdots + h_{k+n-1})=i^*$, occurs earlier in the sequence of the $\beta_h$'s than the least decreaser of $\alpha_{t(i),p(i)}$ since, by the definition of $c$, it must be that $j^* < \mu(h_k + \cdots + h_{k+n-1})$ and the latter value, by apartness, equals $\mu(h_{k+n-1})$, as noted above. On the other hand, $j$ is in $[\mu(h_{k+n-1}), \mu(h_{k'}))$, so that $j^*<j$. Thus $j^*$ cannot be maximal below $\mu(h_{k'})$ such that $\alpha_{t(j^*),p(j^*)}$ is the least decreaser of some $\alpha_{\ell'',m}$ with $\iota(\ell'',m)$ below $\lambda(h_k)$, as required by the definition of $c$, since $\alpha_{t(j),p(j)}$ is such a least decreaser of $\alpha_{t(i),p(i)}$, and $i<\lambda(h_k)$. 

This proves the Claim.
\end{proof}

Now it is sufficient to observe that the $(\alpha \oplus H)$-computable function 
$f$ defined as follows has the Property $P$: on input $n$, 
pick the least $k$ such that $\sum_{1\leq n' \leq n} |\alpha_{n'}| < \lambda(h_k)$ and 
let $f(n)$ be the $\alpha$-index of the $\mu(h_{k+n-1})$-th 
element in the sequence $\beta$ of all components appearing in $\alpha$, i.e., $f(n) = t(\mu(h_{k+n-1}))$. 
That this choice of $f$ satisfies Property P is implied by Claim \ref{claim2} above. This concludes the proof of the theorem.
\end{proof}

The proof of Proposition 1 in \cite{Car-Zda:12} shows that $\WOP(\ordX\to \omega^\ordX) \leq_\W \RT^3_3$. 
The proof of Theorem \ref{thm:reghtwop} can be adapted to show that $\WOP(\ordX\to \omega^\ordX) \leq_\W \HT^{=3}_2[\ap]$. Details will be reported elsewhere.

The main reductions between restrictions of $\HT$, restrictions of $\lambda\regHT$ and other principles of interest are visualized in 
Figure \ref{fig:reductions}.

\begin{figure}
\centering
\[\begin{tikzcd}[row sep=3em,column sep=2em]
& \lambda\regHT^{\leq n}[\ap] \arrow[d, "\sW"] \arrow[rr, "\compred"] & & \HT^{\leq n}_k[\ap]  \arrow[d, "\sW"] & \\
\REG^n \arrow[r, "\sW"] \arrow[rrrr, bend right=65, "\compred" below] & \lambda\regHT^{= n}[\ap] \arrow[d, "\W"] \arrow[rr, "\compred"] \arrow[dr, "\W (n\geq 2)"]  & & \HT^{= n}_k[\ap] \arrow[dl, "\W (n\geq 3)"]  
&  \arrow[l, "\sW" above] \RT^n_k  \\
& \WOP(\ordX \to \omega^\ordX) & \RAN & & 
\end{tikzcd}\]
\caption{Diagram of reductions. $\HT^{\leq n}[\ap] \leq_\compred \lambda\regHT^{\leq n}[\ap]$ is from Proposition \ref{prop:regHTtoHTeqn}.
That the versions with sums of exactly $n$ terms reduce to the corresponding versions for sums of $\leq n$ terms is a trivial observation. The reduction $\WOP(\ordX\to \omega^\ordX)\leq_\W \lambda\regHT^{=n}$ for $n\geq 2$ is Theorem \ref{thm:reghtwop}. The reduction $\RAN \leq_\W \lambda\regHT^{=n}$ for $n\geq 2$ is Theorem \ref{thm:regtoaca}. The reduction $\RAN \leq_\W \HT^{=n}_k[\ap]$ for $n\geq 3, k\geq 2$ is from \cite{Car-Kol-Lep-Zda:20}. The reduction $\RT^n_k \leq_\compred \REG^n$ is from Proposition \ref{prop:regrt}. The reduction $\HT^{=n}_h \leq_\sW \RT^n_k$ is folklore.} \label{fig:reductions}
\end{figure}
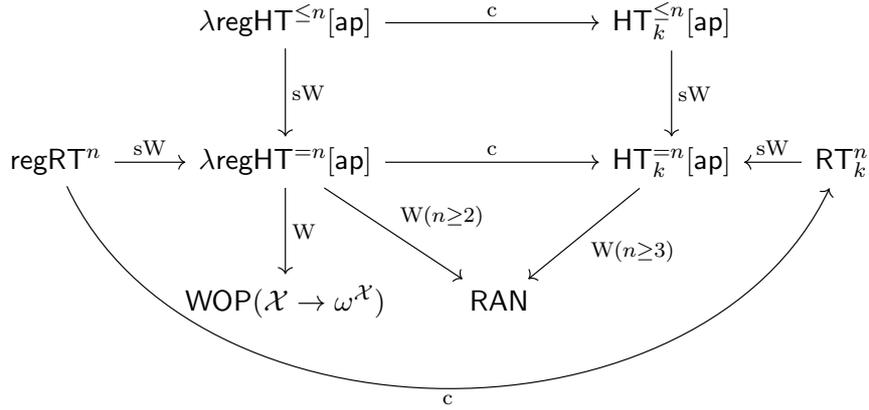

\section{Conclusion and open questions}

In analogy with Kanamori-McAloon's Regressive Ramsey's Theorem \cite{Kan-McA:87} we obtained a Regressive Hindman's Theorem 
as a straightforward corollary of Taylor's Canonical Hindman's Theorem \cite{Tay:76} restricted to a suitable class of regressive functions
and relative to an appropriate variant of min-homogeneity. We studied the strength of this principle and of its 
restrictions in terms of provability over $\RCA_0$ and computable reductions. 

In particular we showed that the seemingly weakest (non-trivial) restriction of our Regressive Hindman's Theorem ($\lambda\regHT^{=2}$), with a natural apartness condition on the solution set, is equivalent
to $\ACA_0$. This restriction ensures that sums of two numbers from the solution set get the same colour if they have the same minimum term. For the restrictions of the standard Hindman's Theorem to sums of exactly $n$ elements, the level of $\ACA_0$ is reached only when we consider sums of exactly $3$ elements. This situation is analogous to that of $\REG^2$ 
when compared to $\RT^3_2$. Furthermore, we proved that the well-ordering preservation principle that characterizes $\ACA_0$ ($\WOP(\ordX \to \omega^\ordX)$) is Weihrauch-reducible to $\lambda\regHT^{=2}$ with apartness.

Many open questions remain concerning the strength of the Regressive Hindman's Theorem, of its restrictions, and of related principles. Here are some natural ones. 

\begin{question}
What are the optimal upper bounds for $\canHT$, for $\lambda\regHT$ and for $\lambda\regHT^{\leq n}$?
\end{question}

\begin{question}
Is $\lambda\regHT$ implied by/reducible to $\HT$ (and similarly for bounded versions)?
\end{question}

\begin{question}
What is the strength of $\lambda\regHT^{=2}$ without apartness? More generally, how do the bounded Regressive Hindman's Theorems behave with respect to apartness?
\end{question}

\begin{question}
Can the reductions in Proposition \ref{prop:regHTtoHTeqn} and Theorem \ref{thm:reghtwop} be improved to stronger reductions?
\end{question}

Very recently, Hirschfeldt and Reitzes \cite{Hir-Rei:pre} investigated Hindman-type variants of the Thin Set Theorem which, as is the case
for our Regressive Hindman's Theorem, deals with colourings with unboundedly many colours. It would be interesting to 
investigate possible relations between the two families. 

\bigskip
{\bf Acknowledgments} We thank the anonymous referees for pointing out some inaccuracies in previous versions of the paper. 
Their further suggestions led to a significant improvement of the presentation, to the addition of Theorem \ref{thm:canap}.

\end{document}